\def\pmod #1{\ ({\rm{mod}}\ #1)}
\def\Z{\Bbb Z}
\def\N{\Bbb N}
\def\l{\left}
\def\r{\right}
\def\bg{\bigg}
\def\({\bg(}
\def\){\bg)}
\def\t{\text}
\def\f{\frac}
\def\ls{\leqslant}
\def\gs{\geqslant}
\def\bi{\binom}
\def\eq{\equiv}
\def\Proof{\noindent{\it Proof}}
\def\Ack{\medskip\noindent {\bf Acknowledgment}}
\theoremstyle{plain}
\newtheorem{theorem}{Theorem}[section]
\newtheorem{lemma}[theorem]{Lemma}
\theoremstyle{remark}
\newtheorem{remark}[theorem]{Remark}
\begin{document}
 \baselineskip=17pt
\hbox{Int. J. Number Theory 16(2020), no.\,5, 981--1003.}
\medskip
\title[Congruences for Ap\'ery numbers $\beta_{n}=\sum_{k=0}^{n}\binom{n}{k}^2\binom{n+k}{k}$]
{Congruences for Ap\'ery numbers $\beta_{n}=\sum_{k=0}^{n}\binom{n}{k}^2\binom{n+k}{k}$}
\author[H.-Q. Cao, Yu. Matiyasevich and Z.-W. Sun]{Hui-Qin Cao, Yuri Matiyasevich, and Zhi-Wei Sun}

\thanks{2010 {\it Mathematics Subject Classification}.
Primary 11B65; Secondary 11A07, 11B68, 11B75.
\newline\indent {\it Keywords}. Ap\'ery numbers, Bernoulli numbers, congruences.
\newline \indent The research is supported by the NSFC-RFBR Cooperation and Exchange Program (NSFC Grant No. 11811530072,
RFBR Grant No. 18-51-53020-GFEN\underline{\phantom{x}}a).
The first and the third authors are also supported by the Natural Science Foundation of China (Grant No. 11971222).}

\address {(Hui-Qin Cao) Department of Applied Mathematics, Nanjing Audit University,
Nanjing 211815, People's Republic of China}
\email{\url{caohq@nau.edu.cn}}

\address {(Yuri Matiyasevich) St. Petersburg Department of Steklov Mathematical Institute of Russian Academy of Sciences, Fontanka 27, 191023, St. Petersburg, Russia}
\email{\url{yumat@pdmi.ras.ru}}

\address {(Zhi-Wei Sun, corresponding author) Department of Mathematics, Nanjing
University, Nanjing 210093, People's Republic of China}
\email{\url{zwsun@nju.edu.cn}}

\begin{abstract}
In this paper we establish some congruences involving the Ap\'ery numbers
$\beta_{n}=\sum_{k=0}^{n}\binom{n}{k}^2\binom{n+k}{k}$.
For example, we show that $$\sum_{k=0}^{n-1}(11k^2+13k+4)\beta_k\equiv0\pmod{2n^2}$$
for any positive integer $n$, and
$$\sum_{k=0}^{p-1}(11k^2+13k+4)\beta_k\equiv 4p^2+4p^7B_{p-5}\pmod{p^8}$$
for any prime $p>3$, where $B_{p-5}$ is the $(p-5)$th Bernoulli number. We also present certain relations between congruence properties of
the two kinds of A\'pery numbers,
 $\beta_n$ and $A_n=\sum_{k=0}^n\bi nk^2\bi{n+k}k^2$.
\end{abstract}

\maketitle
\section{Introduction}
\setcounter{equation}{0}

In 1979 R. Ap\'ery (see \cite{A} and \cite{Po}) established the irrationality of $\zeta(3)=\sum_{n=1}^\infty 1/n^3$ by using the Ap\'ery numbers
$$A_n=\sum_{k=0}^n\bi
nk^2\bi{n+k}k^2\ \ (n\in\N=\{0,1,2,\ldots\}).$$
His method also allowed him to prove the irrationality of $\zeta(2)=\pi^2/6$ via another kind of Ap\'ery numbers
$$\beta_n:=\sum_{k=0}^n\bi nk^2\bi{n+k}k\ \ (n\in\N).$$

In 2012, the third author
\cite{Sun12a} proved that
$$\sum_{k=0}^{n-1}(2k+1)A_k\eq0\pmod n\ \ \ \t{for all}\ n\in\Z^+=\{1,2,3,\ldots\}$$
and
$$\sum_{k=0}^{p-1}(2k+1)A_k\eq p+\f 76p^4B_{p-3}\pmod{p^5}\quad\t{for any prime}\ p>3,$$
where $B_0,B_1,B_2,\ldots$ are the Bernoulli numbers defined by
$$B_0=1, \ \t{and}\ \sum_{k=0}^n\bi{n+1}kB_k=0\ \ \t{for all}\ n\in\Z^+.$$
(For the basic properties of Bernoulli numbers, see \cite[pp. 228-248]{IR}.)
The third author
also conjectured that
$$\sum_{k=0}^{n-1}(2k+1)(-1)^kA_k\eq0\pmod n\quad\t{for all}\ n\in\Z^+$$
and
$$\sum_{k=0}^{p-1}(2k+1)(-1)^kA_k\eq p\l(\f p3\r)\pmod{p^3}\ \ \t{for any prime}\ p>3,$$
which was confirmed by V.J.W. Guo and J. Zeng \cite{GZ}.
In 2016
the third author
\cite{Sun16} showed that
$$\sum_{k=0}^{n-1}(6k^3+9k^2+5k+1)(-1)^kA_k\eq0\pmod {n^3}\quad\t{for all}\ n\in\Z^+.$$
In view of this, it is natural to ask whether the Ap\'ery numbers $\beta_n\ $ 
also have such kind of congruence properties. This is the main motivation of this paper.

Another motivation is due to  a theorem
of
E.\,Rowland and R.\,Yassawi  \cite{RY} about $A_n\ (n\in\N)$ modulo $16$.
We will extend their result to $A_n$ modulo $32$;
quite unexpectedly, this requires consideration
of $\beta_n$
 modulo $8$.

Now we state our main results which were first found via {\tt Mathematica}.

\begin{theorem}\label{Th1.1} For any positive integer $n$, we have
\begin{align}\label{1.1}\sum_{k=0}^{n-1}(11k^2+13k+4)\beta_k\eq0\pmod{2n^2},
\\\label{1.2}\sum_{k=0}^{n-1}(11k^2+9k+2)(-1)^k\beta_k\eq0\pmod{2n^2},
\\\label{1.3}\sum_{k=0}^{n-1}(11k^3+7k^2-1)(-1)^k\beta_k\eq0\pmod{n^2}.
\end{align}
\end{theorem}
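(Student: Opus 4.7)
The plan is to reduce all three congruences to exact closed-form identities derived from the Apéry recurrence
\[
(k+1)^2\beta_{k+1}=(11k^2+11k+3)\beta_k+k^2\beta_{k-1}\qquad(k\gs 0),
\]
a standard consequence of Zeilberger's algorithm applied to the defining sum of $\beta_k$. Rewriting the recurrence as $(11k^2+11k+3)\beta_k=(k+1)^2\beta_{k+1}-k^2\beta_{k-1}$, the key observation is that the three coefficient polynomials split cleanly against it:
\begin{align*}
11k^2+13k+4 &= (11k^2+11k+3)+(2k+1),\\
11k^2+9k+2 &= (11k^2+11k+3)-(2k+1),\\
11k^3+7k^2-1 &= k(11k^2+11k+3)-(4k^2+3k+1).
\end{align*}
In each case the ``main'' piece involving $11k^2+11k+3$ telescopes once the recurrence is substituted, while the lower-order remainder is precisely what is needed to absorb the resulting internal boundary terms.

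Executing the Abel summations, I expect to obtain the following exact identities, which I would sanity-check for $n=1,2,3$:
\begin{align*}
\sum_{k=0}^{n-1}(11k^2+13k+4)\beta_k &= n^2(\beta_n+\beta_{n-1}),\\
\sum_{k=0}^{n-1}(11k^2+9k+2)(-1)^k\beta_k &= (-1)^{n-1}n^2(\beta_n-\beta_{n-1}),\\
\sum_{k=0}^{n-1}(11k^3+7k^2-1)(-1)^k\beta_k &= (-1)^{n-1}n^2\bigl((n-1)\beta_n-n\beta_{n-1}\bigr).
\end{align*}
The third identity immediately yields \eqref{1.3}. For \eqref{1.1} and \eqref{1.2} it remains to see that $\beta_n\pm\beta_{n-1}$ is always even, which reduces to showing $\beta_n\eq 1\pmod 2$ for every $n\in\N$. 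This is a one-line consequence of Lucas' theorem: the summand $\binom{n}{k}^2\binom{n+k}{k}$ is odd precisely when $k$ is a binary submask of $n$ (from $\binom{n}{k}$) and simultaneously $k$ and $n$ have disjoint binary supports (from $\binom{n+k}{k}$); the two conditions together force $k=0$, leaving $\beta_n\eq 1\pmod 2$.

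The main obstacle in carrying out the plan is the Abel-summation bookkeeping for the cubic identity \eqref{1.3}: the collapse of the boundary terms into the clean form $(n-1)\beta_n-n\beta_{n-1}$ hinges on an algebraic simplification such as $(n+2)(n-1)^2+3(n-1)+1=n^3$, and care is needed with the endpoint contributions at $k=0$ and $k=n-1$. Once the three closed-form identities are established, the stated congruences follow at once.
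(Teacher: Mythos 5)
Your proposal is correct, and it takes a genuinely different route from the paper. The paper never invokes the three-term Ap\'ery recurrence; instead it applies the Mu--Sun two-variable creative-telescoping method to the double sum $\sum_{k,l}(11k^2+13k+4)\binom{k}{l}^2\binom{k+l}{l}$ (and its two analogues), collapsing each to a single sum $\sum_{k=0}^{n-1}\binom{n}{k+1}^2\binom{n+k}{k}a_i(n,k)$ from which the factor $n^2$ is extracted via $\binom{n}{k+1}(k+1)=n\binom{n-1}{k}$; the extra factor of $2$ then requires a separate parity analysis using the Chu--Vandermonde identity and a mod-$2$ lemma about $\sum_k\binom{n-1}{k}\binom{n+k}{2k}\binom{2k}{k+1}$. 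Your route is cleaner at both stages: the three exact closed forms
\begin{equation*}
n^2(\beta_n+\beta_{n-1}),\qquad (-1)^{n-1}n^2(\beta_n-\beta_{n-1}),\qquad (-1)^{n-1}n^2\bigl((n-1)\beta_n-n\beta_{n-1}\bigr)
\end{equation*}
are all correct (I verified them for $n\le 3$ and checked that the inductive step works), and once they are conjectured the easiest way to finish is not Abel-summation bookkeeping but direct induction on $n$: subtracting consecutive values of the right-hand side and substituting $n^2\beta_n=(11(n-1)^2+11(n-1)+3)\beta_{n-1}+(n-1)^2\beta_{n-2}$ reproduces exactly the summand at $k=n-1$, precisely because of the decompositions you list (e.g.\ $11m^2+11m+3+(2m+1)=11m^2+13m+4$ and $m(11m^2+11m+3)-(m+1)^3+m^2(m-1)=11m^3+7m^2-1$). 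Your Lucas/Kummer argument that $\beta_n$ is odd is also correct and replaces the paper's Lemma 2.2 and Chu--Vandermonde computations. What your approach buys is stronger, self-contained identities (refining the congruences to exact formulas) at the cost of importing the Ap\'ery recurrence as a black box; the paper's approach is self-contained modulo the WZ-style certificates $F_1,F_2$ and generalizes more readily to sums without a known closed form. Do make sure to either cite the recurrence $(n+1)^2\beta_{n+1}=(11n^2+11n+3)\beta_n+n^2\beta_{n-1}$ (it is Ap\'ery's original recurrence for the $\zeta(2)$ numbers) or derive it, since your whole argument rests on it.
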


\begin{theorem}\label{main1}
Let $p>3$ be a prime. Then
\begin{align}
\label{1.4}\sum_{k=0}^{p-1}(11k^2+13k+4)\beta_k&\equiv 4p^2+4p^7B_{p-5}\pmod {p^8},\\
\label{1.5}\sum_{k=0}^{p-1}(11k^2+9k+2)(-1)^{k}\beta_k&\equiv 2p^2+10p^3H_{p-1}-p^7B_{p-5}\pmod{p^8},
\end{align}
and
\begin{equation}\label{1.6}\begin{aligned}
&\sum_{k=0}^{p-1}(11k^3+7k^2-1)(-1)^{k}\beta_k
\\\equiv& 2p^3-3p^2+(10p-5)p^3H_{p-1}-\f32p^7B_{p-5}\pmod{p^8},
\end{aligned}\end{equation}
where $H_{p-1}$ denotes the harmonic number $\sum_{k=1}^{p-1}\f1k$.
\end{theorem}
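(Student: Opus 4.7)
The plan is to observe that each of the three partial sums in Theorem~\ref{Th1.1} telescopes to a clean closed form, and then to extract the exact value modulo $p^8$ through fine $p$-adic expansions of $\beta_p$ and $\beta_{p-1}$. The only algebraic ingredient for the first step is the three-term Ap\'ery recurrence
\[
(n+1)^2\beta_{n+1} = (11n^2+11n+3)\beta_n + n^2\beta_{n-1}.
\]
Using the decomposition $11k^2+13k+4 = (11k^2+11k+3) + (2k+1)$ together with $2k+1=(k+1)^2-k^2$ and the recurrence, one checks that $T_k := k^2(\beta_k+\beta_{k-1})$ satisfies $(11k^2+13k+4)\beta_k = T_{k+1}-T_k$, so
\[
\sum_{k=0}^{n-1}(11k^2+13k+4)\beta_k = n^2(\beta_n+\beta_{n-1}).
\]
Parallel manipulations with signs, using $U_k := (-1)^{k-1}k^2(\beta_k-\beta_{k-1})$ and $W_k := (-1)^{k-1}k^2[(k-1)\beta_k - k\beta_{k-1}]$, give
\begin{align*}
\sum_{k=0}^{n-1}(-1)^k(11k^2+9k+2)\beta_k &= (-1)^{n-1}n^2(\beta_n-\beta_{n-1}),\\
\sum_{k=0}^{n-1}(-1)^k(11k^3+7k^2-1)\beta_k &= (-1)^{n-1}n^2\bigl[(n-1)\beta_n - n\beta_{n-1}\bigr].
\end{align*}

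Setting $n=p$ and dividing by $p^2$, all three congruences of Theorem~\ref{main1} collapse to the single pair of refined evaluations
\begin{align*}
\beta_p &\equiv 3 + 5pH_{p-1} + \tfrac{3}{2}p^5 B_{p-5}\pmod{p^6},\\
\beta_{p-1} &\equiv 1 - 5pH_{p-1} + \tfrac{5}{2}p^5 B_{p-5}\pmod{p^6}.
\end{align*}
The sum and difference of these two yield \eqref{1.4} and \eqref{1.5} after multiplying by $p^2$, and \eqref{1.6} emerges from the combination $(p-1)\beta_p - p\beta_{p-1}$ after observing that the stray term $\tfrac{3}{2}p^6 B_{p-5}$ vanishes modulo $p^6$. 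So the problem reduces to establishing these two displayed $p$-adic refinements.

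The remaining task is to expand each factor in $\binom{p-1}{k}^2\binom{p-1+k}{k}$ and in $\binom{p}{k}^2\binom{p+k}{k}$ as a power series in $p$ through order $p^5$, with coefficients in the truncated harmonic sums $H_k^{(j)}=\sum_{i=1}^k 1/i^j$, multiply, and sum on $k$. The outer sums $\sum_{k=1}^{p-1}H_k^{(i)}H_k^{(j)}/k^\ell$ that arise are handled by the classical Glaisher--Sun--Zhao evaluations; the top-order $p^5B_{p-5}$ contribution emerges from $\sum_{k=1}^{p-1}1/k^4\equiv \tfrac{4}{5}pB_{p-5}\pmod{p^2}$ and $\sum_{k=1}^{p-1}1/k^3\equiv -\tfrac{6}{5}p^2 B_{p-5}\pmod{p^3}$. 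For $\beta_p$ one first separates the boundary terms $k=0$ (trivial) and $k=p$ (giving $\binom{2p}{p}$, whose expansion to the required precision is the Ljunggren--Kazandzidis congruence); the interior $1\le k\le p-1$ is then brought within the same framework via $\binom{p}{k}^2 = (p^2/k^2)\binom{p-1}{k-1}^2$, which extracts the necessary factor of $p^2$. The chief obstacle is the bookkeeping in this expansion: tracking every mixed-degree cross-term of $p$ and the various $H_{p-1}^{(j)}$ all the way through order $p^5$, and confirming that once the Glaisher--Sun--Zhao reductions are applied, everything collapses to the compact right-hand sides predicted by the theorem.
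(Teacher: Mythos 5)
Your route is genuinely different from the paper's in its combinatorial half, and cleaner there. The paper never invokes the Ap\'ery recurrence: its Lemma \ref{Lem2.1} converts each sum, via the Mu--Sun double-sum telescoping, into $\sum_{k=0}^{n-1}\binom{n}{k+1}^2\binom{n+k}{k}a_i(n,k)$, and then Theorem \ref{three} evaluates the moments $\sigma_r=\sum_{k=1}^{p-1}\binom{p}{k}^2\binom{p+k-1}{k-1}k^r$ for $r=0,1,2$ to high $p$-adic precision. Your telescoping identities are correct: using $(k+1)^2\beta_{k+1}=(11k^2+11k+3)\beta_k+k^2\beta_{k-1}$ one indeed gets $T_{k+1}-T_k=(11k^2+13k+4)\beta_k$ and the analogous statements for $U_k$ and $W_k$, so the three sums equal $n^2(\beta_n+\beta_{n-1})$, $(-1)^{n-1}n^2(\beta_n-\beta_{n-1})$ and $(-1)^{n-1}n^2\bigl[(n-1)\beta_n-n\beta_{n-1}\bigr]$ exactly (this also gives Theorem \ref{Th1.1} at once, since all $\beta_n$ are odd). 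The reduction of \eqref{1.4}--\eqref{1.6} to your two displayed congruences for $\beta_p$ and $\beta_{p-1}$ modulo $p^6$ is likewise correct, and those two congruences are true (they are forced by the theorem together with your identities, and both check out numerically for $p=5$). What your approach buys is conceptual transparency and a two-target reformulation; what it does not buy is any saving in the analytic work. The entire $p$-adic content of the theorem now sits in the expansions of $\beta_{p-1}=\sum_k\binom{p-1}{k}^2\binom{p-1+k}{k}$ and $\beta_p$ through order $p^5$, and your final paragraph only describes, rather than carries out, that expansion and the reduction of the resulting weight-$\le 5$ multiple harmonic sums via the Glaisher/Tauraso/Zhou--Cai/Zhao evaluations; this is a computation of essentially the same length and delicacy as the paper's Lemmas \ref{Lem3.4}--\ref{Lem3.7} and Theorem \ref{three}. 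So the proposal is a correct and attractive reorganization, but its decisive computational step is asserted rather than proved.
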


\begin{theorem}\label{a16b4}
Let $n$ be a nonnegative integer. If
\begin{equation}
A_n\equiv 8j+4i+1 \pmod{16}
\end{equation}
with $\{i,j\}\subseteq \{0,1\}$,
then
\begin{eqnarray}
  \beta_n\equiv 2(i+j)+1 \pmod{4}.
\end{eqnarray}\label{ab}
\end{theorem}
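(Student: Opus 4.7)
My plan is to reduce each of the sums $A_n$ and $\beta_n$ to a handful of surviving terms via $2$-adic valuation estimates, then to read off everything from the base-$2$ expansion of $n$. Set $\alpha_k=v_2\!\binom{n}{k}$ and $\beta_k=v_2\!\binom{n+k}{k}$, so that the $k$-th summand of $A_n$ has $2$-adic valuation $2(\alpha_k+\beta_k)$ while the $k$-th summand of $\beta_n$ has valuation $2\alpha_k+\beta_k$. Consequently only $k$ with $\alpha_k+\beta_k\le 1$ can contribute to $A_n$ modulo $16$, and only $k$ with $\alpha_k=0,\ \beta_k\le 1$ to $\beta_n$ modulo $4$. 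I will partition these surviving indices into $T=\{\alpha_k=\beta_k=0\}$, $U=\{\alpha_k=0,\beta_k=1\}$, and $V=\{\alpha_k=1,\beta_k=0\}$.

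The first key observation is $T=\{0\}$. By Lucas, $\binom{n}{k}$ is odd exactly when $k$ is a bitwise submask of $n$; by Kummer, $\binom{n+k}{k}$ is odd exactly when $k$ and $n$ share no common $1$-bit in base~$2$; the only $k$ satisfying both is $k=0$. Using the elementary fact $x^2\equiv 1\pmod 8$ for odd $x$, each $k\in U$ contributes $\equiv 2\pmod 4$ to $\beta_n$ and $\equiv 4\pmod{16}$ to $A_n$, while each $k\in V$ contributes $\equiv 4\pmod{16}$ to $A_n$ (and nothing to $\beta_n$ modulo $4$). These reductions collapse both sums to
\[
\beta_n\equiv 1+2|U|\pmod 4,\qquad A_n\equiv 1+4(|U|+|V|)\pmod{16}.
\]

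The central combinatorial step is to evaluate $|U|$ and $|V|$ from the binary expansion of $n$. A bit-by-bit carry analysis shows that a carry initiated at a $1$-bit of $k\subseteq n$ propagates through the maximal run of $1$-bits of $n$ directly above it, so exactly one carry occurs in $n+k$ iff $k$ is a single $1$-bit sitting at the top of some maximal $1$-run of $n$; hence $|U|$ equals the number of maximal $1$-runs of $n$. A dual borrow analysis for $v_2\!\binom{n}{k}$ when $k$ is disjoint from $n$ in base~$2$ shows that $V$ consists precisely of the single-bit $k=2^i$ with $n_i=0$ and $n_{i+1}=1$; counting these ``rising edges'' gives $|V|=|U|-(n\bmod 2)$.

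Substituting yields $A_n\equiv 1+8|U|-4(n\bmod 2)\pmod{16}$; matching against $A_n\equiv 1+4i+8j\pmod{16}$ forces $i\equiv n\pmod 2$ and $j\equiv|U|+(n\bmod 2)\pmod 2$, whence $i+j\equiv|U|\pmod 2$ regardless of the parity of $n$. Combined with $\beta_n\equiv 1+2|U|\pmod 4$, this is exactly the claimed congruence $\beta_n\equiv 2(i+j)+1\pmod 4$. The main technical obstacle is the borrow analysis producing $|V|$: one must rule out multi-bit $k$ by showing that if $k$ is disjoint from $n$ in base~$2$ with $s_2(k)\ge 2$, then $v_2\!\binom{n}{k}\ge 2$, which requires careful bookkeeping of how successive borrows into a common $0$-run of $n$ interact and produce enough new $1$-bits in $n-k$.
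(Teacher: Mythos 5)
Your proof is correct, but it takes a genuinely different route from the paper. The paper's proof of this theorem is entirely computational in the Rowland--Yassawi framework: both $A_n\bmod q$ and $\beta_n\bmod q$ are $p$-automatic because the Ap\'ery numbers are diagonals of explicit rational functions, so the authors machine-generate the minimal automaton $\mathcal{A}_{16}$ for $A_n\bmod 16$ and $\mathcal{B}_4$ for $\beta_n\bmod 4$, relabel the states of $\mathcal{A}_{16}$ by the function $f(1)=f(13)=1$, $f(5)=f(9)=3$, minimize, and observe that the result coincides with $\mathcal{B}_4$. Your argument is elementary and self-contained: the $2$-adic splitting of the surviving indices into $T=\{0\}$, $U$ (tops of maximal $1$-runs of $n$) and $V$ (rising edges $n_i=0$, $n_{i+1}=1$) via Kummer and Lucas is sound, the counts $|U|=R(n)$ (number of maximal $1$-runs) and $|V|=R(n)-(n\bmod 2)$ are right, and the resulting closed forms $A_n\equiv 1+8R(n)-4(n\bmod 2)\pmod{16}$ and $\beta_n\equiv 1+2R(n)\pmod 4$ immediately give the claim; I spot-checked these against $n=1,2,3,5$. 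What your approach buys is considerable: it avoids any appeal to software or to the diagonal representations, and it essentially re-derives Rowland--Yassawi's Theorem 3.28 (the relation between $A_n\bmod 16$ and $L_2(n)\bmod 4$) as a byproduct, since $L_2(n)$ is expressible through $R(n)$ and $n\bmod 2$. One remark: the ``main technical obstacle'' you flag at the end is not really an obstacle --- if $k$ has two $1$-bits at positions where $n$ has $0$-bits, then the subtraction $n-k$ incurs a borrow at each such position (the digit computed there is $n_i-k_i-b\le -1$ regardless of the incoming borrow $b$), so $v_2\binom{n}{k}\ge 2$ with no interaction analysis needed; the analogous one-carry-per-bit observation disposes of multi-bit $k$ in $U$. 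Writing out those two short borrow/carry counts would make the proof fully rigorous.
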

\begin{remark} For any $n\in\N$, we clearly have
$$A_n=1+4\sum_{0<k\ls n}\bi{n+k}{2k}^2\bi{2k-1}{k-1}^2\eq1\pmod 4.$$
\end{remark}

\begin{theorem}\label{a32b8} Let $n$ be a nonnegative integer.
If
$$
A_n\equiv 16k+8j+4i+1 \pmod{32}\ \t{and}\ \beta_n\equiv4m+2l+1 \pmod{8}
$$
with $\{i,j,k,l,m\}\subseteq \{0,1\}$,
then
\begin{equation}
L_2(n)\equiv 4(k+m)+2j+i \pmod{8},
\label{ab32}
\end{equation}
where $L_2(n)$ (the
binary run-length of $n$) is defined as the number
of blocks of consecutive $0$'s and $1$'s in the binary expansion of $n$.
\end{theorem}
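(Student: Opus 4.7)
My strategy is to view both sides of \eqref{ab32} as $2$-automatic functions of $n$ and show they agree by matching their finite-state descriptions. Theorem \ref{a16b4} already pins down the relationship modulo $4$ on the $\beta$-side and modulo $16$ on the $A$-side; the new content is the extra high bit of $A_n \bmod 32$ (the $k$-term) and the extra high bit of $\beta_n \bmod 8$ (the $m$-term), whose parity $k+m \bmod 2$ must be shown to track the top bit of $L_2(n) \bmod 8$.

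First I would exhibit $(A_n \bmod 32,\,\beta_n \bmod 8)$ as a $2$-automatic sequence in $n$. Since $A_n$ and $\beta_n$ are diagonals of rational functions with integer coefficients, the Rowland--Yassawi framework \cite{RY} guarantees $2$-automaticity of each component modulo any power of $2$; explicitly computing the $2$-kernels $\{n\mapsto A_{2^{r}n+s}\bmod 32\colon r\ge 0,\ 0\le s<2^{r}\}$ and the analogous kernel for $\beta_n\bmod 8$ yields two finite automata whose product determines $(i,j,k,l,m)$ from the binary expansion of $n$. In parallel, $L_2(n)\bmod 8$ is itself plainly $2$-automatic: an automaton with at most $16$ states, tracking the last consumed bit together with $L_2\bmod 8$, outputs $L_2(n)\bmod 8$ when fed the binary digits of $n$ from the most significant end.

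Next I would establish the joint recursions
\begin{equation*}
(A_{2n},A_{2n+1},\beta_{2n},\beta_{2n+1})\ \t{mod}\ (32,32,8,8)\ \ \t{in terms of}\ \ (A_n,\beta_n)\ \t{mod}\ (32,8),
\end{equation*}
either by reading them off from the Rowland--Yassawi construction or by direct manipulation of the defining sums. These recursions give a transition function on the finite set of joint residues, and comparing them against the transition function for $L_2(n)\bmod 8$ (which increments by $1$ precisely when the new bit differs from the previous one) reduces the theorem to a finite, purely computational check. Theorem \ref{a16b4} will be used to collapse the number of accessible joint states, so that the verification is manageable rather than exhaustive.

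The main obstacle will be the third stage: carrying out the case analysis for the transitions of the joint state $(A_n\bmod 32,\beta_n\bmod 8)$ and matching every transition to the correct increment of $L_2(n)\bmod 8$. Conceptually the argument is a straightforward induction on the number of binary digits of $n$, with base cases checked for small $n$; the difficulty is bookkeeping, because the bit $k+m\bmod 2$ mixes information from two different sequences, and one must verify that the apparent coincidence in the bottom bits (already explained by Theorem \ref{a16b4}) propagates consistently to the top bit under all eight possible digit-transition patterns.
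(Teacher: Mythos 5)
Your overall strategy---treat $A_n \bmod 32$, $\beta_n \bmod 8$ and $L_2(n) \bmod 8$ as $2$-automatic sequences, form a product automaton, and reduce the theorem to a finite state-by-state check---is exactly the route the paper takes: it builds the minimal automata $\mathcal{A}_{32,5}$ and $\mathcal{B}_{8,3}$ for the fifth binary digit of $A_n$ and the third binary digit of $\beta_n$ with Rowland's software, takes their product, replaces the two output bits by their sum modulo $2$, minimizes, and recognizes the result as the automaton computing the third digit of $L_2(n)$; the lower digits are disposed of by Theorem \ref{Th5.1} and Theorem \ref{a16b4}, just as you intend. So the plan is sound in outline.

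There is, however, one step in your third stage that would fail as written: you propose to express $(A_{2n},A_{2n+1},\beta_{2n},\beta_{2n+1})$ modulo $(32,32,8,8)$ \emph{in terms of the residues} $(A_n,\beta_n)$ modulo $(32,8)$ and to run the induction on these residue pairs. No such recursion exists. Since $A_n\equiv 1\pmod 4$, the residue $A_n\bmod 32$ takes at most $8$ values, while the minimal automaton computing $A_n\bmod 32$ has $33$ states (and $\mathcal{B}_{8}$ has $17$); the value of $A_n\bmod 32$ therefore cannot determine the values of $A_{2n}\bmod 32$ and $A_{2n+1}\bmod 32$---the automaton state necessarily carries strictly more information than the output residue. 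The correct state space for your induction is the one you already name in your first paragraph, namely the $2$-kernel (equivalently, the Rowland--Yassawi states attached to the diagonal representations of $A_n$ and $\beta_n$), and the transition function acts on those states, not on residues; with that substitution the finite check goes through, but it is then genuinely a machine computation on a few dozen states rather than on $32$ residue pairs. A smaller repair: your $L_2$ automaton reads the most significant bit first, whereas the kernel automata read the least significant bit first; both machines must consume digits in the same order, and the recursion $L_2(n)=L_2(\lfloor n/2\rfloor)+\delta$ with $\delta=1$ exactly when $n\equiv 1,2\pmod 4$ (used in Section \ref{bandA}) already gives a least-significant-first automaton for $L_2$, so this is easily fixed.
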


We will prove Theorem \ref{Th1.1} in the next section.
We are going to provide some lemmas in Section 3 and show Theorem \ref{main1} in Section 4.
 Theorems \ref{a16b4} and \ref{a32b8}
will be proved in Section \ref{bandA}.

\section{Proof of Theorem 1.1}
\setcounter{equation}{0}

\begin{lemma}\label{Lem2.1} For any $n\in\Z^+$, we have
\begin{align}
\label{2.1}\sum_{k=0}^{n-1}(11k^2+13k+4)\beta_k=-\sum_{k=0}^{n-1}\binom{n}{k+1}^2\binom{n+k}{k}a_1(n,k),\\
\label{2.2}\sum_{k=0}^{n-1}(11k^2+9k+2)(-1)^{n-1-k}\beta_k=\sum_{k=0}^{n-1}\binom{n}{k+1}^2\binom{n+k}{k}a_2(n,k),\\
\label{2.3}\sum_{k=0}^{n-1}(11k^3+7k^2-1)(-1)^{n-1-k}\beta_k
=\sum_{k=0}^{n-1}\binom{n}{k+1}^2\binom{n+k}{k}a_3(n,k),
\end{align}
where
\begin{equation}\label{2.4}\begin{aligned}
a_1(n,k)=&n^3-n^2k-nk^2-2n^2-5nk+k^2-4n+2k+1,\\
a_2(n,k)=&n(3n-k-1),\\
a_3(n,k)=&4n^3-2n^2k-nk^2-4n^2-3nk+k^2-2n+2k+1.
\end{aligned}\end{equation}
\end{lemma}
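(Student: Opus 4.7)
The plan is to prove the three identities of Lemma \ref{Lem2.1} in parallel by induction on $n$, via creative telescoping (the WZ method). I describe the approach for (\ref{2.1}); the other two are handled identically, with the alternating sign $(-1)^{n-1-k}$ merely contributing an overall sign flip when passing from $n$ to $n+1$.

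Denote the two sides of (\ref{2.1}) by $L_n$ and $R_n$. The base case $n=1$ is direct: $L_1=4\beta_0=4$ and $R_1=-a_1(1,0)=4$. The inductive step requires showing
\[
R_{n+1}-R_n \;=\; (11n^2+13n+4)\beta_n \;=\; L_{n+1}-L_n.
\]
Expanding $\beta_n=\sum_{k=0}^n\binom{n}{k}^2\binom{n+k}{k}$ and assembling $R_{n+1}-R_n$ into a single sum indexed by $k\in\{0,1,\dots,n\}$ (the $k=n$ term of the extended $R_n$-sum vanishes because $\binom{n}{n+1}=0$), it suffices to exhibit a hypergeometric function $H(n,k)$ with cancelling boundary values at $k=0$ and $k=n+1$, such that the summand
\[
-\binom{n+1}{k+1}^{2}\binom{n+k+1}{k}a_1(n+1,k)+\binom{n}{k+1}^{2}\binom{n+k}{k}a_1(n,k)-(11n^2+13n+4)\binom{n}{k}^{2}\binom{n+k}{k}
\]
equals $H(n,k+1)-H(n,k)$.

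The natural ansatz is $H(n,k)=Q(n,k)\binom{n}{k}^{2}\binom{n+k}{k}$ with $Q$ a polynomial whose total degree is governed by that of $a_1$. After dividing through by the common factor $\binom{n}{k}^{2}\binom{n+k}{k}$ and using the elementary ratios $\binom{n+1}{k+1}/\binom{n}{k}=(n+1)/(k+1)$ and $\binom{n+k+1}{k}/\binom{n+k}{k}=(n+k+1)/(n+1)$, the desired equation becomes an honest polynomial identity in $n$ and $k$; the unknown coefficients of $Q$ are then pinned down by matching powers of $k$. The same recipe produces certificates for (\ref{2.2}) and (\ref{2.3}); since $a_2(n,k)=n(3n-k-1)$ is much simpler than $a_1$ or $a_3$, I would begin with (\ref{2.2}) as a warm-up to calibrate the correct degree of the ansatz before attacking (\ref{2.1}) and (\ref{2.3}).

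The main obstacle is the \emph{discovery} of each certificate $Q(n,k)$: the linear system for its coefficients grows with $\deg a_i$ and is tedious to solve by hand. Since the authors explicitly remark that the results were first found via \texttt{Mathematica}, I would invoke a Zeilberger-algorithm implementation (for instance \texttt{fastZeil}) to output the certificates directly; from that point the verification collapses to routine polynomial algebra, which is precisely what the authors carry out in Section~2.
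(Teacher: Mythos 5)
Your strategy is sound and would very likely go through, but it is organized differently from the paper's proof. You fix the right-hand side $R_n$ and prove by induction on $n$ that it satisfies the same first-order relation as the left-hand side, reducing each inductive step to a single-variable telescoping (Gosper/WZ) certificate in $k$. The paper instead never inducts: it rewrites the left-hand side as the double sum $\sum_{k=0}^{n-1}\sum_{l=0}^{n-1}F(k,l)$ with $F(k,l)=(11k^2+13k+4)\binom{k}{l}^2\binom{k+l}{l}$ and exhibits a two-variable certificate pair $(F_1,F_2)$ with $F=\Delta_kF_1+\Delta_lF_2$ (the Mu--Sun method), so that summing over the square kills all boundary contributions except $\sum_l F_1(n,l)$, which is exactly the stated right-hand side. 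The paper's route has the advantage that the right-hand side is \emph{produced} by the computation rather than verified after the fact, and that a single certificate pair handles all $n$ at once with no induction; your route has the advantage of being a completely standard application of Zeilberger-style machinery to a stated identity. Both ultimately rest on machine-discovered certificates checked by routine polynomial algebra.

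Two caveats on your write-up. First, you do not actually exhibit the certificates, so as it stands this is a verification plan rather than a proof; the paper, by contrast, writes down $F_1,F_2$ (and $G_1,G_2,H_1,H_2$) explicitly so that the reader can check the telescoping identity by hand. Second, your ansatz $H(n,k)=Q(n,k)\binom{n}{k}^2\binom{n+k}{k}$ with $Q$ a \emph{polynomial} is optimistic: the combined summand you need to telescope is $\binom{n}{k}^2\binom{n+k}{k}$ times a rational function with denominator $(k+1)^2(n+1)$, and the shift $\binom{n}{k+1}^2\binom{n+k+1}{k+1}/\bigl(\binom{n}{k}^2\binom{n+k}{k}\bigr)=(n-k)^2(n+k+1)/(k+1)^3$ introduces a cubic denominator, so you should expect $Q$ to be rational (indeed the paper's own certificates carry a factor $1/(l+1)^2$). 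Moreover, Gosper-summability of the combined term is not a priori guaranteed merely because its sum vanishes; if Gosper were to fail you would have to fall back on a higher-order Zeilberger recurrence or on the paper's double-sum telescoping. In practice the certificate does exist here, so this is a gap in justification rather than a fatal flaw.
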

\Proof.  Note that
$$\sum_{k=0}^{n-1}(11k^2+13k+4)\beta_k=\sum_{k=0}^{n-1}\sum_{l=0}^{n-1}F(k,l)$$
is a double sum with
$$F(k,l)=(11k^2+13k+4)\bi kl^2\bi{k+l}l.$$
Using the method in Mu and Sun's paper \cite{MS}, we find
$$F_1(k,l)=-\f{(k-l)^2a_1(k,l)}{(l+1)^2(11k^2+13k+4)}F(k,l)$$
and
$$F_2(k,l)=\f{l(3kl-l^2+2k-2l)}{11k^2+13k+4}F(k,l)$$
so that
$$F(k,l)=(F_1(k+1,l)-F_1(k,l))+(F_2(k,l+1)-F_2(k,l))$$
which can be verified directly.
This allows us to reduce the double sum $\sum_{k=0}^{n-1}\sum_{l=0}^{n-1}F(k,l)$
to the right-hand side of \eqref{2.1}.

Identities
\eqref{2.2} and \eqref{2.3} can be deduced similarly; in fact,
\begin{align*}&(11k^2+9k+2)(-1)^{k}\bi kl^2\bi{k+l}l
\\=&(G_1(k+1,l)-G_1(k,l))+(G_2(k,l+1)-G_2(k,l))
\end{align*}
and
\begin{align*}&(11k^3+7k^2-1)(-1)^{k}\bi kl^2\bi{k+l}l
\\=&(H_1(k+1,l)-H_1(k,l))+(H_2(k,l+1)-H_2(k,l)),
\end{align*}
where
\begin{align*}G_1(k,l)=&-\f{(k-l)^2a_2(k,l)}{(l+1)^2}(-1)^k\bi kl^2\bi{k+l}l,
\\ G_2(k,l)=&-l(6k+l+3)(-1)^k\bi kl^2\bi{k+l}l,
\\H_1(k,l)=&-\f{(k-l)^2a_3(k,l)}{(l+1)^2}(-1)^k\bi kl^2\bi{k+l}l,
\\H_2(k,l)=&-l(8k^2+l^2+4k+2l+2)(-1)^k\bi kl^2\bi{k+l}l.
\end{align*}
This ends our proof.
\qed

\begin{lemma}\label{Lem2.2} For any positive even integer $n$, we have
\begin{equation}\label{2.5}\sum_{k=0}^{n-1}\bi{n-1}k\bi{n+k}{2k}\bi{2k}{k+1}\eq1\pmod2.
\end{equation}
\end{lemma}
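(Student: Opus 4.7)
My plan is to analyze the sum modulo $2$ via Lucas's (equivalently Kummer's) theorem, showing that all but one of the summands are even.

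The first step is to determine exactly when the third factor $\binom{2k}{k+1}$ is odd. Writing $\binom{2k}{k+1}=\binom{(k-1)+(k+1)}{k+1}$, Kummer's theorem tells us that this is odd iff $k-1$ and $k+1$ add without carry in binary, i.e., their bit-supports are disjoint. The bottom-bit analysis forces $k$ to be odd; then setting $k=2m+1$ reduces the disjointness condition to requiring that $m$ and $m+1$ have disjoint bit-supports, which forces $m=2^j-1$, so $k+1=2^{j+1}$ must be a power of $2$. (The $k=0$ summand vanishes identically.) Consequently, modulo $2$,
\[
\sum_{k=0}^{n-1}\binom{n-1}{k}\binom{n+k}{2k}\binom{2k}{k+1}
\equiv \sum_{\substack{a\ge 1\\ 2^a\le n}}\binom{n-1}{2^a-1}\binom{n+2^a-1}{2^{a+1}-2}\pmod 2.
\]

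Since $n$ is even, I would then write $n=2^s t$ with $t$ odd and $s\ge 1$ and read off the binary expansion of $n-1$: bits $0,\dots,s-1$ all equal $1$, bit $s$ equals $0$, and the higher bits come from $t$ (shifted by $s$). Because $k=2^a-1$ has its $1$-bits exactly in positions $0,\dots,a-1$, Lucas's theorem shows $\binom{n-1}{2^a-1}$ is odd iff $a\le s$, so only those $a$ need be examined.

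For $a<s$, bit $a$ of $n+k=2^st+(2^a-1)$ is $0$ (the bottom $s$ bits come entirely from $2^a-1<2^s$), while bit $a$ of $2k=2^{a+1}-2$ is $1$, so $\binom{n+k}{2k}$ is even. For $a=s$, we have $n+k=2^s(t+1)-1$, and since $t+1$ is even the lowest $s+1$ bits of $n+k$ are all $1$; these dominate the $1$-bits of $2k=2^{s+1}-2$ (positions $1,\dots,s$), making $\binom{n+k}{2k}$ odd. Hence exactly one admissible $a$, namely $a=s$, contributes an odd summand, and the full sum is $\equiv 1\pmod 2$. The main obstacle is the first step; once the power-of-$2$ characterization of odd $\binom{2k}{k+1}$ is in hand, the rest is a careful but mechanical inspection of binary expansions.
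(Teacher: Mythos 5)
Your proof is correct, and it takes a genuinely different route from the paper's. The paper first rewrites the summand via the identity $\binom{n+k}{2k}\binom{2k}{k+1}=\binom{n+k}{k+1}\binom{n-1}{k-1}$, then kills the even-$k$ terms by noting $\binom{n+k}{k+1}=\frac{n+k}{k+1}\binom{n+k-1}{k}$ is even when $2\mid k$, replaces $\binom{n-1}{k-1}$ by $\binom{n-1}{k}\equiv(-1)^{k+1}\binom{n-1}{k}$ for odd $k$ (using $2\mid n$), and finishes with the Chu--Vandermonde identity, which evaluates the resulting sum to $(-1)^n$. You instead work entirely $2$-adically: Kummer's theorem shows $\binom{2k}{k+1}$ is odd exactly when $k+1=2^a$, and a Lucas-theorem inspection of binary digits (with $n=2^st$, $t$ odd) shows that among those $k=2^a-1$ the product $\binom{n-1}{k}\binom{n+k}{2k}$ is odd precisely for $a=s$; all the digit computations check out (bits $0,\dots,s-1$ of $n-1$ are $1$ and bit $s$ is $0$, which kills $a>s$; bit $a$ of $n+2^a-1$ is $0$ for $a<s$; and $n+2^s-1=2^{s+1}\frac{t+1}{2}-1$ has bits $0,\dots,s$ all equal to $1$). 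Your argument is a bit longer but more informative, since it identifies the unique odd summand, namely $k=2^s-1$ where $2^s$ exactly divides $n$; the paper's argument is shorter and reuses Chu--Vandermonde, which it also needs elsewhere in the proof of Theorem 1.1. One cosmetic slip: the higher bits of $n-1$ come from $t-1$ rather than $t$, but you only use bits $0,\dots,s$, so nothing is affected.
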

\Proof. Clearly,
$$\sum_{k=0}^{n-1}\bi{n-1}k\bi{n+k}{2k}\bi{2k}{k+1}
=\sum_{0<k<n}\bi{n-1}k\bi{n+k}{k+1}\bi{n-1}{k-1}.$$
For $k\in\{0,\ldots,n-1\}$, if $2\mid k$ then $n+k$ is even and hence
$$\bi{n+k}{k+1}=\f{n+k}{k+1}\bi{n+k-1}k\eq0\pmod2;$$
if $2\nmid k$ then
$$\bi{n-1}k+\bi{n-1}{k-1}=\bi nk=\f nk\bi{n-1}{k-1}\eq0\pmod2.$$
Therefore,
\begin{align*}\sum_{k=0}^{n-1}\bi{n-1}k\bi{n+k}{2k}\bi{2k}{k+1}
\eq&\sum_{k=0}^{n-1}\bi{n-1}k\bi{n+k}{k+1}\bi{n-1}k
\\\eq&\sum_{k=0}^{n-1}\bi{n-1}k(-1)^{k+1}\bi{n+k}{k+1}
\\=&(-1)^n\pmod 2,
\end{align*}
where we have applied (in the last step) the Chu-Vandermonde identity (cf. \cite[(3.1)]{Gould}). This proves \eqref{2.5}. \qed

\medskip
\noindent{\it Proof of Theorem 1.1}. For each $k=0,\ldots,n-1$, clearly
$$\bi n{k+1}^2(k+1)^2=n^2\bi{n-1}k^2$$
and $$\bi n{k+1}^2\bi{n+k}kn(k+1)=n^2\bi{n-1}k\bi{n+k}{2k}(n-k)C_k,$$
where $C_k$ is the Catalan number $\bi{2k}k/(k+1)=\bi{2k}k-\bi{2k}{k+1}$.

(i) As
$$a_1(n,k)=n^2(n-k-2)-n(k+1)(k+4)+(k+1)^2,$$
we have
\begin{align*}&\f1{n^2}\sum_{k=0}^{n-1}\binom{n}{k+1}^2\binom{n+k}{k}a_1(n,k)
\\=&\sum_{k=0}^{n-1}\bi n{k+1}^2\bi{n+k}k(n-k-2)
\\&-\sum_{k=0}^{n-1}\bi{n-1}k\bi{n+k}{2k}(n-k)(k+4)C_k
+\sum_{k=0}^{n-1}\bi{n-1}k^2\bi{n+k}k.
\end{align*}

Clearly $11k^2+13k+4\eq0\pmod2$ for all $k=0,\ldots,n-1$. So, by Lemma 2.1 and the above, (1.1) holds when $2\nmid n$.

Now suppose that $n$ is even. By Lemma 2.1 and the above, we have
\begin{align*}&\f1{n^2}\sum_{k=0}^{n-1}(11k^2+13k+4)\beta_k
\\\eq&\sum_{k=0}^{n-1}\bi n{k+1}\bi{n+k}kk+\sum_{k=0}^{n-1}\bi{n-1}k\bi{n+k}{2k}kC_k
\\&+\sum_{k=0}^{n-1}\bi{n-1}k\bi{n+k}k
\pmod2.
\end{align*}
With the help of the Chu--Vandermonde identity,
\begin{align*}\sum_{k=0}^{n-1}\bi n{k+1}\bi{n+k}kk=&\sum_{k=0}^{n-1}\bi{n}{n-1-k}\bi{-n-1}k(-1)^k k
\\\eq&(-n-1)\sum_{k=1}^{n-1}\bi{n}{n-1-k}\bi{-n-2}{k-1}
\\\eq&\bi{-2}{n-2}=(-1)^n(n-1)\eq1\pmod2
\end{align*} and
\begin{align*}\sum_{k=0}^{n-1}\bi{n-1}k\bi{n+k}k
=&\sum_{k=0}^{n-1}\bi{n-1}k\bi{-n-1}k(-1)^k
\\\eq&\sum_{k=0}^{n-1}\bi{n-1}{n-1-k}\bi{-n-1}k
\\=&\bi{-2}{n-1}=(-1)^n n\eq0\pmod2.
\end{align*}
In view of \eqref{2.5}, we also have
\begin{equation}\label{2.7}\sum_{k=0}^{n-1}\bi{n-1}k\bi{n+k}{2k}kC_k\eq1\pmod2
\end{equation} since $kC_k=\bi{2k}{k+1}$. Therefore \eqref{1.1} holds.

(ii) As $a_2(n,k)=3n^2-(k+1)n$, we have
 \begin{align*}&\f1{n^2}\sum_{k=0}^{n-1}\binom{n}{k+1}^2\binom{n+k}{k}a_2(n,k)
\\=&3\sum_{k=0}^{n-1}\bi n{k+1}^2\bi{n+k}k-\sum_{k=0}^{n-1}\bi{n-1}k\bi{n+k}{2k}(n-k)C_k.
\end{align*}
Clearly $11k^2+9k+2\eq0\pmod2$ for all $k=0,\ldots,n-1$. So, by Lemma 2.1 and the above, (1.2) holds when $2\nmid n$.

Now suppose that $n$ is even. By Lemma 2.1 and the above, we have
\begin{align*}&\f1{n^2}\sum_{k=0}^{n-1}(11k^2+9k+2)(-1)^k\beta_k
\\\eq&\sum_{k=0}^{n-1}\bi n{k+1}\bi{n+k}k+\sum_{k=0}^{n-1}\bi{n-1}k\bi{n+k}{2k}kC_k\pmod{2}.
\end{align*}
By the Chu--Vandermonde identity,
\begin{align*}\sum_{k=0}^{n-1}\bi n{k+1}\bi{n+k}k
\eq&\sum_{k=0}^{n-1}\bi n{n-1-k}\bi{-n-1}k
=\bi{-1}{n-1}\eq1\pmod2.
\end{align*}
Combining this with \eqref{2.7} we obtain \eqref{1.2}.

(iii) Since
$$a_3(n,k)=2n^2(2n-k-2)-n(k+1)(k+2)+(k+1)^2,$$
we have
\begin{align*}&\f1{n^2}\sum_{k=0}^{n-1}\binom{n}{k+1}^2\binom{n+k}{k}a_3(n,k)
\\=&2\sum_{k=0}^{n-1}\bi n{k+1}^2\bi{n+k}k(2n-k-2)
\\&-\sum_{k=0}^{n-1}\bi{n-1}k\bi{n+k}{2k}(n-k)(k+2)C_k
+\sum_{k=0}^{n-1}\bi{n-1}k^2\bi{n+k}k
\end{align*}
and hence \eqref{1.3} follows from \eqref{2.3}.

In view of the above, we have completed the proof of Theorem 1.1. \qed

\section{Some lemmas}
\setcounter{equation}{0}

For each $s\in\Z^+$, we define
$$
H_{n,s}:=\sum_{0<k\ls n}\frac{1}{k^s}\qquad \t{for}\ n=0,1,2,\ldots,
$$
and call such numbers {\it harmonic numbers of order $s$}. Those $H_n:=H_{n,1}\ (n\in\N)$ are usually called {\it harmonic numbers}.

\begin{lemma}\label{Lem3.1} For any $n\in\Z^+$, we have
\begin{align}\label{equa1}
\sum_{k=1}^{n}\binom{n}{k}\frac{(-1)^{k-1}}{k}=&H_n,
\\\label{equa2}
\sum_{k=1}^{n}\binom{n}{k}\frac{(-1)^{k-1}}{k}H_k=&H_{n,2},
\\\label{equa3}
\sum_{k=1}^{n}\binom{n}{k}(-1)^{k-1}H_{k,2}=&\frac{H_n}{n}.
\end{align}
\end{lemma}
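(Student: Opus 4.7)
The plan is to prove all three identities using induction on $n$ combined with interchange of summation order, the partial-sum identity
$$\sum_{k=0}^{m}\bi{n}{k}(-1)^k=(-1)^m\bi{n-1}{m},$$
and the absorption relation $k\bi{n}{k}=n\bi{n-1}{k-1}$.

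For \eqref{equa1}, I would let $f(n)$ denote the left-hand side and compute the telescoping difference $f(n)-f(n-1)$. Pulling off the $k=n$ term and then applying $\bi{n}{k}-\bi{n-1}{k}=\bi{n-1}{k-1}$ rewrites this difference as $\sum_{k=1}^{n}\bi{n-1}{k-1}(-1)^{k-1}/k$. The absorption identity turns each term into $\bi{n}{k}(-1)^{k-1}/n$, and the binomial theorem collapses the sum to $1/n$. Since $1/n=H_n-H_{n-1}$ and $f(1)=1=H_1$, induction yields $f(n)=H_n$.

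For \eqref{equa2}, I would apply the same telescoping to $g(n):=\sum_{k=1}^{n}\bi{n}{k}(-1)^{k-1}H_k/k$, obtaining
$$g(n)-g(n-1)=\f{1}{n}\sum_{k=1}^{n}\bi{n}{k}(-1)^{k-1}H_k.$$
I would then split $H_k=H_{k-1}+1/k$: the $1/k$ piece contributes $H_n$ by \eqref{equa1}, while for the $H_{k-1}$ piece I would write $H_{k-1}=\sum_{j=1}^{k-1}1/j$, swap summations, and use the partial-sum identity to obtain $-H_{n-1}$. This gives $g(n)-g(n-1)=(H_n-H_{n-1})/n=1/n^2=H_{n,2}-H_{n-1,2}$. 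Combined with $g(1)=1=H_{1,2}$, induction closes the argument.

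For \eqref{equa3}, interchange of summation is immediate and efficient: writing $H_{k,2}=\sum_{j=1}^{k}1/j^2$ and switching order gives
$$\sum_{k=1}^{n}\bi{n}{k}(-1)^{k-1}H_{k,2}=\sum_{j=1}^{n}\f{1}{j^2}\sum_{k=j}^{n}\bi{n}{k}(-1)^{k-1}.$$
The inner sum equals $(-1)^{j-1}\bi{n-1}{j-1}$ by the partial-sum identity (after passing to the complementary range). Using $\bi{n-1}{j-1}=(j/n)\bi{n}{j}$, the outer sum becomes $(1/n)\sum_{j=1}^{n}\bi{n}{j}(-1)^{j-1}/j$, which equals $H_n/n$ by \eqref{equa1}.

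The main obstacle is the bookkeeping in \eqref{equa2}, where two different reductions of the internal sum $\sum_{k=1}^{n}\bi{n}{k}(-1)^{k-1}H_k$ need to be combined; the observation $H_k=H_{k-1}+1/k$ is the key idea that makes the induction go through, and once that auxiliary evaluation $\sum_{k=1}^{n}\bi{n}{k}(-1)^{k-1}H_k=1/n$ is in hand, the rest is routine manipulation.
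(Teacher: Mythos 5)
Your proposal is correct, and it is worth noting that the paper itself does not prove Lemma \ref{Lem3.1} at all: Remark \ref{Rem3.1} simply cites \cite{Gould}, \cite{H} and \cite{S15} for the three identities. So you are supplying a self-contained argument where the authors defer to the literature. Your argument checks out in every detail: the telescoping step for \eqref{equa1} and \eqref{equa2} rests on merging the $k=n$ term with $\binom{n}{k}-\binom{n-1}{k}=\binom{n-1}{k-1}$ to get $\sum_{k=1}^{n}\binom{n-1}{k-1}(-1)^{k-1}c_k$, and then absorption gives $\frac1n\sum_{k=1}^{n}\binom{n}{k}(-1)^{k-1}c_k$; with $c_k=1/k$ this is $1/n$ by the binomial theorem, and with $c_k=H_k$ your auxiliary evaluation $\sum_{k=1}^{n}\binom{n}{k}(-1)^{k-1}H_k=H_n-H_{n-1}=1/n$ (via $H_k=H_{k-1}+1/k$, interchange of summation, and the partial-sum identity) is exactly right, yielding the increment $1/n^2$. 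For \eqref{equa3} the interchange gives the inner sum $(-1)^{j-1}\binom{n-1}{j-1}$ and the final reduction to $\frac1n\sum_{j}\binom{n}{j}\frac{(-1)^{j-1}}{j}=\frac{H_n}{n}$ via \eqref{equa1} is clean. This inductive/telescoping route is essentially the standard elementary proof (the cited proof of \cite[Lemma 3.1]{S15} proceeds in a similar finite-difference spirit), so nothing is lost relative to the paper; what you gain is that the lemma no longer depends on external references.
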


\begin{remark}\label{Rem3.1} The identities (\ref{equa1})-(\ref{equa3}) are known.
See \cite[(1.45)]{Gould} for \eqref{equa1}, \cite{H} for \eqref{equa2},
and the proof of \cite[Lemma 3.1]{S15} for simple proofs of \eqref{equa2} and \eqref{equa3}.
\end{remark}

Let $p>3$ be a prime.
In 1900 J.W.L. Glaisher \cite{Glaisher-1,Glaisher-2} refined Wolstenholme's work
\cite{Wolstenholme} on congruences
by showing that
\begin{align}\label{harmo1}
H_{p-1}\equiv -\frac{p^2}{3}B_{p-3}\pmod {p^3}
\end{align}
and
\begin{equation}\label{harmo2}
H_{p-1,s}\equiv\frac{s}{s+1}pB_{p-1-s}\pmod {p^2}\ \ \t{for all}\ s=1,\ldots,p-2.
\end{equation}

\begin{lemma}\label{Lem3.2} {\rm (X. Zhou and T. Cai \cite[p.\,1332]{ZC})} Let $r,s\in\Z^+$. For any prime $p>rs+2$, we have
\begin{align*}\sum_{1\ls i_1<i_2<\ldots<i_r\ls p-1}\f1{i_1^si_2^s\ldots i_r^s}
\eq\begin{cases}(-1)^r\f{s(rs+1)}{2(rs+2)}p^2B_{p-rs-2}\pmod {p^3}&\t{if}\ 2\nmid rs,
\\(-1)^{r-1}\f{s}{rs+1}pB_{p-rs-1}\pmod{p^2}&\t{if}\ 2\mid rs.\end{cases}
\end{align*}
\end{lemma}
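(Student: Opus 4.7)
The sum in question, call it $e_r$, is the $r$th elementary symmetric function of the $p-1$ reciprocals $\{1/i^s\colon 1\ls i\ls p-1\}$. The plan is to express $e_r$ through Newton's identities in terms of the corresponding power sums $P_k:=\sum_{i=1}^{p-1}1/i^{sk}=H_{p-1,sk}$, and then to apply the Glaisher--Wolstenholme congruences \eqref{harmo1} and \eqref{harmo2}.

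First I would pin down the $p$-adic orders of the power sums. By pairing $i$ with $p-i$ and using the $p$-adic expansion $(p-i)^{-m}\eq (-1)^m i^{-m}(1+mp/i+\bi{m+1}{2}p^2/i^2)\pmod{p^3}$, one sees that $v_p(P_k)\gs 1$ always, while the extra cancellation when $sk$ is odd (so that $(-1)^{sk}=-1$) forces $v_p(P_k)\gs 2$. The explicit leading residues in both parity cases come from \eqref{harmo2} together with its mod-$p^3$ refinement for odd $sk$, which is obtained from the same pairing argument after rewriting the resulting half-sums back in terms of the full $H_{p-1,m}$.

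Next I would set up the Newton identity $re_r=\sum_{k=1}^{r}(-1)^{k-1}e_{r-k}P_k$ (with $e_0=1$) and prove by induction on $r$ that $v_p(e_r)\gs 1$ with $e_r\eq\f{(-1)^{r-1}}{r}P_r\pmod{p^2}$ in case $rs$ is even, and that $v_p(e_r)\gs 2$ with $e_r\eq\f{(-1)^{r-1}}{r}P_r\pmod{p^3}$ in case $rs$ is odd. The inductive step amounts to showing that every cross term $e_{r-k}P_k$ with $1\ls k<r$ has $v_p$ strictly exceeding the target; in the tight case $rs$ odd, any two-part split $r=(r-k)+k$ mixes parities so that $v_p(e_{r-k})+v_p(P_k)\gs 2+1=3$. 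The division by $r$ in the Newton identity is legitimate because $p>rs+2\gs r$, so $r$ is invertible modulo $p^3$.

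Substituting the Glaisher--Wolstenholme values for $P_r=H_{p-1,rs}$ into the reduction $e_r\eq\f{(-1)^{r-1}}{r}P_r$ then yields the target expressions: $(-1)^{r-1}\f{s}{rs+1}pB_{p-rs-1}\pmod{p^2}$ when $rs$ is even, and $(-1)^{r}\f{s(rs+1)}{2(rs+2)}p^2B_{p-rs-2}\pmod{p^3}$ when $rs$ is odd, matching the statement exactly (the minus sign in the mod-$p^3$ refinement of $P_r$ combines with the $(-1)^{r-1}$ to produce the $(-1)^r$). I expect the principal obstacle to be the parity bookkeeping in the inductive valuation argument: when $rs$ is odd one must carefully verify that \emph{every} cross term $e_{r-k}P_k$ (for $1\ls k<r$) contributes at least $p^3$, which requires splitting on the parity of $k$ and invoking the inductive hypothesis for $e_{r-k}$ with the appropriate parity of $(r-k)s$. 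Once this valuation bookkeeping is complete, the remainder is mechanical substitution from \eqref{harmo2} and its refinement.
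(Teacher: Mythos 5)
The paper offers no proof of this lemma: it is quoted verbatim from Zhou and Cai \cite{ZC}, so there is nothing internal to compare against. Judged on its own terms, your Newton's-identity argument is correct and is essentially the standard derivation. The key points all check out: the identity $re_r=\sum_{k=1}^{r}(-1)^{k-1}e_{r-k}P_k$ with $P_k=H_{p-1,sk}$; the valuation bookkeeping (when $rs$ is odd, $r$ and $s$ are both odd, so in any split $r=(r-k)+k$ exactly one of $k,\,r-k$ is odd, giving $v_p(e_{r-k})+v_p(P_k)\gs 3$; when $rs$ is even, every cross term has valuation $\gs 2$); and the final substitution, where the sign in $H_{p-1,m}\eq-\tfrac{m(m+1)}{2(m+2)}p^2B_{p-m-2}\pmod{p^3}$ for odd $m$ converts $(-1)^{r-1}/r$ into the stated $(-1)^r$. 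Two small points you should make explicit in a full write-up: (i) the mod-$p^3$ refinement of \eqref{harmo2} for odd index is not stated in the paper (only the case $m=1$, namely \eqref{harmo1}, is), so you must either cite Glaisher for it or carry out the pairing computation $2H_{p-1,m}\eq-mpH_{p-1,m+1}-\binom{m+1}{2}p^2H_{p-1,m+2}\pmod{p^3}$ that you sketch; and (ii) the hypothesis $p>rs+2$ guarantees both that $r$ and all denominators $rs+1$, $rs+2$ are invertible mod $p$ and that every index $sk\ls rs$ stays in the range where \eqref{harmo2} and its refinement are valid (in particular $sk\ls p-4$ in the odd case). With those details filled in, the proof is complete.
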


\begin{lemma}\label{Lem3.3} {\rm (i) (J. Zhao \cite[Theorems 3.1 and 3.2]{Zh})} Let $s,t\in\Z^+$ and let $p\gs s+t$ be a prime. Then
$$\sum_{1\ls j<k\ls p-1}\f1{j^sk^t}\eq\f{(-1)^t}{s+t}\bi{s+t}sB_{p-s-t}\pmod p.$$
If $s\eq t\pmod2$ and $p>s+t+1$, then
\begin{align*}\sum_{1\ls j<k\ls p-1}\f1{j^sk^t}\eq&\l((-1)^st\bi{s+t+1}s-(-1)^ts\bi{s+t+1}t-s-t\r)
\\&\times\f{pB_{p-s-t-1}}{2(s+t+1)}\pmod{p^2}.
\end{align*}

{\rm (ii) (J. Zhao \cite[Theorem 3.5]{Zh})} Let $r,s,t\in\Z^+$ with $r+s+t$ odd. For any prime $p>r+s+t$, we have
\begin{align*}\sum_{1\ls i<j<k\ls p-1}\f1{i^rj^sk^t}\eq&\l((-1)^r\bi{r+s+t}r-(-1)^t\bi{r+s+t}t\r)
\\&\times\f{B_{p-(r+s+t)}}{2(r+s+t)}\pmod p.
\end{align*}

{\rm (iii) (J. Zhao \cite[Corollary 3.6]{Zh})} For any prime $p>5$, we have
$$\sum_{1\ls i<j\ls k\ls p-1}\f1{i^2jk}\eq\sum_{1\ls i<j\ls k\ls p-1}\f1{ij^2k}
\eq\sum_{1\ls i<j\ls k\ls p-1}\f1{ijk^2}\eq0\pmod p.$$
\end{lemma}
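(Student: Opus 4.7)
The plan is to prove Lemma~\ref{Lem3.3} via the standard toolkit for multiple harmonic sums modulo primes, essentially following Zhao. Three ingredients drive everything: the reflection expansion $\frac{1}{(p-k)^s}\equiv(-1)^s\sum_{j\geq 0}\binom{s+j-1}{j}\frac{p^j}{k^{s+j}}\pmod{p^N}$; the stuffle (quasi-shuffle) identity, whose depth-$2$ instance reads
\begin{equation*}
H_{p-1,s}\,H_{p-1,t}=\sum_{1\leq j<k\leq p-1}\frac{1}{j^sk^t}+\sum_{1\leq j<k\leq p-1}\frac{1}{j^tk^s}+H_{p-1,s+t};
\end{equation*}
and Glaisher's congruence \eqref{harmo2}, namely $H_{p-1,s}\equiv\frac{s}{s+1}pB_{p-1-s}\pmod{p^2}$ for $1\leq s\leq p-2$, together with the vanishing $B_{2m+1}=0$ for $m\geq 1$.

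For part (i) modulo $p$, write $T_{s,t}:=\sum_{1\leq j<k\leq p-1}1/(j^sk^t)$. The substitution $(j,k)\mapsto(p-k,p-j)$ preserves the order $j<k$, and modulo $p$ it yields $T_{s,t}\equiv(-1)^{s+t}T_{t,s}\pmod{p}$. Combining this with the stuffle identity and Glaisher's evaluation of $H_{p-1,s+t}$ produces a $2\times 2$ linear system in $(T_{s,t},T_{t,s})$ whose unique solution is $\frac{(-1)^t}{s+t}\binom{s+t}{s}B_{p-s-t}\pmod{p}$. For the $p^2$ refinement (where the hypothesis $s\equiv t\pmod 2$ makes the linear system nondegenerate), one extends the reflection to second order. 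This introduces weight-$(s+t+1)$ sums $T_{t,s+1}$ and $T_{t+1,s}$, which vanish modulo $p$ by the result just proved (odd total weight); evaluating them modulo $p^2$ by a second pass of stuffle and Glaisher gives the asymmetric binomial factor with $\binom{s+t+1}{s}$ and $\binom{s+t+1}{t}$.

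For parts (ii) and (iii) the same strategy applies, with more elaborate bookkeeping. The depth-$3$ stuffle identity expresses a product $H_{p-1,r}\cdot T_{s,t}$ as a sum of depth-$3$ sums over all interleavings of $r$ into the word $(s,t)$, plus depth-$2$ corrections. Writing this identity together with its permutations from the $S_3$-action on $(r,s,t)$, and coupling with the reflection $(i,j,k)\mapsto(p-k,p-j,p-i)$, which introduces the global sign $(-1)^{r+s+t}=-1$ under the hypothesis of (ii), yields enough linear equations modulo $p$ to isolate the target depth-$3$ sum. The depth-$2$ corrections are supplied by part (i), and Glaisher gives the depth-$1$ values, so the system closes and the stated Bernoulli-number expression drops out. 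For (iii), the total weight is $4$, and every Bernoulli number that could appear in the resulting formulas is either $B_{p-4}$ (zero, since $p-4$ is odd and $\geq 3$) or comes with an extra factor of $p$, forcing the three sums to vanish modulo $p$.

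The main obstacle is combinatorial rather than conceptual: tracking signs under the reflection, enumerating the stuffle terms at depth $3$, and executing the second-order reflection expansion needed for the $p^2$ refinement of (i) must all be done carefully in order to reproduce the precise binomial coefficients in the statement. Once the linear systems are set up correctly, the Bernoulli-number substitutions are routine.
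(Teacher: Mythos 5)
The paper offers no proof of Lemma \ref{Lem3.3}: all three parts are quoted directly from J.~Zhao's work, so any argument you supply is being measured only against correctness, and your sketch has a genuine gap at its very first step. For part (i) modulo $p$, the two relations you propose to solve simultaneously are the stuffle identity $T_{s,t}+T_{t,s}=H_{p-1,s}H_{p-1,t}-H_{p-1,s+t}\equiv 0\pmod p$ and the reflection $T_{s,t}\equiv(-1)^{s+t}T_{t,s}\pmod p$. When $s+t$ is even these do force $T_{s,t}\equiv0$, consistent with $B_{p-s-t}=0$; but when $s+t$ is odd --- the only case in which the asserted Bernoulli value is generically nonzero --- the reflection relation reads $T_{s,t}+T_{t,s}\equiv0$, i.e.\ it coincides with the stuffle relation, and your $2\times2$ system is degenerate. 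It therefore cannot produce the coefficient $\frac{(-1)^t}{s+t}\binom{s+t}{s}$. The standard route (and Zhao's) is different in kind: replace $1/j^s$ by $j^{p-1-s}$, evaluate $\sum_{j=1}^{k-1}j^{p-1-s}$ by Bernoulli's formula for power sums, and use that $\sum_{k=1}^{p-1}k^m\equiv-1$ or $0\pmod p$ according as $(p-1)\mid m$ or not; the binomial coefficient then emerges from $\binom{p-s}{t}\equiv(-1)^t\binom{s+t-1}{t}\pmod p$.

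A second error compounds this in your treatment of the mod $p^2$ refinement: you assert that the weight-$(s+t+1)$ sums $T_{t,s+1}$ and $T_{t+1,s}$ ``vanish modulo $p$ by the result just proved (odd total weight).'' They do not. Odd total weight is precisely the case where part (i) gives a generically nonzero multiple of $B_{p-s-t-1}$, and it is exactly these nonzero values that supply the binomial coefficients $\binom{s+t+1}{s}$ and $\binom{s+t+1}{t}$ appearing in the stated mod $p^2$ formula; if those sums vanished, the formula you are trying to prove could not have that shape. The same degeneracy concern recurs for the depth-3 system in parts (ii) and (iii), where reflection again only reproduces information already contained in the stuffle relations when the total weight is odd. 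Since every application of Lemma \ref{Lem3.3} later in the paper relies on these nonzero odd-weight evaluations, the gap is not cosmetic: either carry out the power-sum computation in full or simply cite Zhao, as the authors do.
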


\begin{lemma}\label{Lem3.4} For any prime $p>5$, we have
\begin{equation}\label{ij}\sum_{k=1}^{p-1}\sum_{1\ls i<j\ls k-1}\f1{i^2j^2}\eq \f25pB_{p-5}-3\f{H_{p-1}}{p^2}\pmod{p^2}
\end{equation}
and
\begin{equation}\label{hij}\sum_{k=1}^{p-1}\sum_{1\ls i<j\ls k-1}\f{H_{k-1}}{i^2j^2}\eq\f{3H_{p-1}}{p^2}\pmod p.
\end{equation}
\end{lemma}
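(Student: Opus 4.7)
The plan is to interchange the order of summation in both sums and reduce each to standard harmonic double sums $\Sigma_{s,t}:=\sum_{1\le i<j\le p-1}\frac{1}{i^s j^t}$, which are governed by Lemmas 3.2 and 3.3. For \eqref{ij}, fixing $(i,j)$ and letting $k$ range over $\{j+1,\ldots,p-1\}$ yields
\begin{equation*}
S_1:=\sum_{k=1}^{p-1}\sum_{1\le i<j\le k-1}\frac{1}{i^2j^2}=\sum_{1\le i<j\le p-1}\frac{p-1-j}{i^2j^2}=(p-1)\Sigma_{2,2}-\Sigma_{2,1}.
\end{equation*}
Lemma 3.2 with $(r,s)=(2,2)$ yields $\Sigma_{2,2}\equiv -\frac{2}{5}pB_{p-5}\pmod{p^2}$, hence $(p-1)\Sigma_{2,2}\equiv\frac{2}{5}pB_{p-5}\pmod{p^2}$, the expected $B_{p-5}$-contribution.

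The main task is to evaluate $\Sigma_{2,1}$ modulo $p^2$ (Lemma 3.3(i) gives only $\Sigma_{2,1}\equiv -B_{p-3}\pmod p$, since its mod $p^2$ clause requires $s\equiv t\pmod 2$). The plan uses the companion identity
\begin{equation*}
\sum_{k=1}^{n}\binom{n}{k}\frac{(-1)^{k-1}}{k^2}=\frac{H_n^2+H_{n,2}}{2},
\end{equation*}
proved by induction on $n$ via the common first difference $H_n/n$. Setting $n=p-1$ and expanding $\binom{p-1}{k}\equiv(-1)^k(1-pH_k+\frac{p^2}{2}(H_k^2-H_{k,2}))\pmod{p^3}$, the two correction sums $\sum_k\frac{H_k^2}{k^2}$ and $\sum_k\frac{H_{k,2}}{k^2}$ are each $\equiv0\pmod p$ (both reducing via Lemma 3.3(i),(iii) and $B_{p-4}=0$ to vanishing pieces). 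This produces $\sum_{k=1}^{p-1}\frac{H_k}{k^2}\equiv\frac{3H_{p-1,2}}{2p}\pmod{p^2}$. Next, the reflection $H_{p-1}=\sum_{k=1}^{p-1}\frac{1}{p-k}$, expanded via the geometric series $\frac{1}{p-k}=-\sum_{m\ge 0}\frac{p^m}{k^{m+1}}$, yields the exact identity $2H_{p-1}+pH_{p-1,2}+p^2H_{p-1,3}+\cdots=0$ in $\Z_p$; since $p^2H_{p-1,3}\equiv0\pmod{p^4}$ (by Lemma 3.2 with $(r,s)=(1,3)$) and higher terms are smaller, one obtains the Glaisher-type refinement $H_{p-1,2}\equiv-\frac{2H_{p-1}}{p}\pmod{p^3}$. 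Hence $\sum_{k=1}^{p-1}\frac{H_k}{k^2}\equiv-\frac{3H_{p-1}}{p^2}\pmod{p^2}$. Writing $B:=\Sigma_{1,2}$, the stuffle identity $\Sigma_{2,1}+\Sigma_{1,2}=H_{p-1}H_{p-1,2}-H_{p-1,3}$ has right-hand side $\equiv0\pmod{p^2}$ (the first summand is $O(p^3)$, the second $O(p^2)$ by Lemma 3.2), while $\sum\frac{H_k}{k^2}=B+H_{p-1,3}\equiv B\pmod{p^2}$. Together these force $\Sigma_{2,1}\equiv\frac{3H_{p-1}}{p^2}\pmod{p^2}$, and substituting proves \eqref{ij}.

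For \eqref{hij}, swapping order gives $S_2=\sum_{1\le i<j\le p-1}\frac{1}{i^2j^2}\sum_{k=j+1}^{p-1}H_{k-1}$. Since $H_{p-1}\equiv0\pmod p$, write $H_{k-1}\equiv-\sum_{l=k}^{p-1}\frac{1}{l}\pmod p$, and a short calculation yields $\sum_{k=j+1}^{p-1}H_{k-1}\equiv(j+1)-jH_j\pmod p$. Substituting,
\begin{equation*}
S_2\equiv\Sigma_{2,2}+\Sigma_{2,1}-\sum_{1\le i<j\le p-1}\frac{H_j}{i^2 j}\pmod p.
\end{equation*}
Here $\Sigma_{2,2}\equiv0$ and $\Sigma_{2,1}\equiv-B_{p-3}\pmod p$; expanding $H_j=H_{j-1}+1/j$ decomposes the last sum into $\Sigma_{2,2}$, $\sum_{i<j}\frac{1}{i^3j}\equiv-B_{p-4}\equiv0$, and the triple sums $\sum_{l<i<j}\frac{1}{l\,i^2\,j}$ and $\sum_{i<l<j}\frac{1}{i^2\,l\,j}$, both $\equiv0\pmod p$ by Lemma 3.3(iii). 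Hence $S_2\equiv-B_{p-3}\equiv\frac{3H_{p-1}}{p^2}\pmod p$, yielding \eqref{hij}.

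The principal obstacle is the mod $p^2$ refinement of $\Sigma_{2,1}$: Lemma 3.3 alone resolves weight-$3$ double harmonic sums of mismatched exponent parities only modulo $p$, and extracting the precise $\frac{H_{p-1}}{p^2}$-term demands both the combinatorial identity for $\sum\binom{p-1}{k}(-1)^{k-1}/k^2$ and the Glaisher-type reflection $H_{p-1,2}\equiv-\frac{2H_{p-1}}{p}\pmod{p^3}$.
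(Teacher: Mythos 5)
Your proof is correct, and its skeleton --- interchanging the order of summation to reduce each double sum to the standard multiple harmonic sums $\sum_{i<j}1/(i^sj^t)$ and then invoking the Zhou--Cai and Zhao evaluations --- is exactly the paper's. The one genuine divergence is how the crucial weight-three sum $\sum_{1\le i<j\le p-1}1/(i^2j)$ is pinned down modulo $p^2$: the paper writes it as $H_{p-1}H_{p-1,2}-H_{p-1,3}-\sum_{i=1}^{p-1}H_{i-1}/i^2$ and then simply cites Tauraso's congruence $\sum_{k=1}^{p-1}H_{k-1}/k^2\equiv-3H_{p-1}/p^2\pmod{p^2}$, whereas you re-derive that input from scratch via the identity $\sum_{k=1}^{n}\binom{n}{k}\frac{(-1)^{k-1}}{k^2}=\frac{H_n^2+H_{n,2}}{2}$ (a correct companion to \eqref{equa1}, provable by the same first-difference induction), the expansion of $\binom{p-1}{k}$ to order $p^2$, and the reflection $2H_{p-1}+pH_{p-1,2}\equiv0\pmod{p^4}$ (a weaker form of the paper's \eqref{H12}). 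I checked the auxiliary claims this requires --- $\sum_k H_k^2/k^2\equiv\sum_k H_{k,2}/k^2\equiv0\pmod p$ via Lemma \ref{Lem3.3}(i),(iii) and $B_{p-4}=0$, and the stuffle relation $\Sigma_{2,1}+\Sigma_{1,2}=H_{p-1}H_{p-1,2}-H_{p-1,3}$ with right-hand side $O(p^2)$ --- and they all hold. Your route costs about a page of extra harmonic-sum bookkeeping but makes the lemma self-contained, removing the dependence on Tauraso's theorem; the paper's route is shorter at the price of an external citation. Part (ii) in your write-up is essentially identical to the paper's (you sum $H_{k-1}$ over $k$ by a reflection, the paper sums $H_s$ over $s$ directly; both land on $\Sigma_{2,1}\equiv-B_{p-3}\equiv 3H_{p-1}/p^2\pmod p$ plus triple sums that vanish by Lemma \ref{Lem3.3}).
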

\Proof. (i) Clearly,
\begin{align*}&\sum_{k=1}^{p-1}\sum_{1\ls i<j\ls k-1}\f1{i^2j^2}+\sum_{1\ls i<j\ls p-1}\f1{i^2j^2}
\\=&\sum_{1\ls i<j\ls p-1}\f{\sum_{j<k\ls p}1}{i^2j^2}=\sum_{1\ls i<j\ls p-1}\f{p-j}{i^2j^2}
\end{align*}
and hence
\begin{equation}\label{i2j2}\sum_{k=1}^{p-1}\sum_{1\ls i<j\ls k-1}\f1{i^2j^2}=(p-1)\sum_{1\ls i<j\ls p-1}\f1{i^2j^2}-\sum_{1\ls i<j\ls p-1}\f1{i^2j}.
\end{equation}

Note that $H_{p-1,3}\eq0\pmod{p^2}$ by \eqref{harmo2} or Lemma \ref{Lem3.2},
and
\begin{equation}\label{tau}\sum_{k=1}^{p-1}\f{H_{k-1}}{k^2}\eq-3\f{H_{p-1}}{p^2}\pmod{p^2}
\end{equation}
by R. Tauraso \cite[Theorem 1.3]{Tauraso}. Thus
\begin{align*}\sum_{1\ls i<j\ls p-1}\f1{i^2j}=&\sum_{i=1}^{p-1}\f{H_{p-1}-H_i}{i^2}
\\=&H_{p-1}H_{p-1,2}-H_{p-1,3}-\sum_{i=1}^{p-1}\f{H_{i-1}}{i^2}\eq3\f{H_{p-1}}{p^2}\pmod{p^2}.
\end{align*}
By Lemma 3.2,
\begin{equation}\label{22}\sum_{1\ls i<j\ls p-1}\f1{i^2j^2}\eq-\f25pB_{p-5}\pmod{p^2}.
\end{equation}
Combining these with \eqref{i2j2}, we immediately obtain
\eqref{ij}.

(ii) As $H_{p-1}\eq0\pmod p$ and
$$\sum_{s=1}^{p-1}H_s=\sum_{1\ls r\ls s\ls p-1}\f1r=\sum_{r=1}^{p-1}\f{p-r}r=pH_{p-1}-(p-1)\eq1\pmod p,$$
we have
\begin{align*}&\sum_{k=1}^{p-1}\sum_{1\ls i<j\ls k-1}\f{H_{k-1}}{i^2j^2}
\\\eq&\sum_{1\ls i<j\ls p-1}\f{\sum_{s=1}^{p-1}H_s-\sum_{0<s<j}H_s}{i^2j^2}
\\\eq&\sum_{1\ls i<j\ls p-1}\f1{i^2j^2}\l(1-\sum_{1\ls r\ls s<j}\f1r\r)
\\=&\sum_{1\ls i<j\ls p-1}\f1{i^2j^2}\l(1-\sum_{0<r<j}\f{j-r}r\r)
=\sum_{1\ls i<j\ls p-1}\f1{i^2j}\(1-\sum_{0<r<j}\f1r\)
\\=&\sum_{1\ls i<j\ls p-1}\f1{i^2j}-\sum_{1\ls i<j\ls p-1}\f1{i^3j}-\sum_{1\ls r<i<j\ls p-1}\f1{ri^2j}-\sum_{1\ls i<r<j\ls p-1}\f1{i^2rj}
\\\eq&\f{(-1)^1}3\bi 32B_{p-3}-\f{(-1)^1}4\bi43B_{p-4}-0-0\eq\f{3H_{p-1}}{p^2}\pmod p
\end{align*}
with the help of Lemma \ref{Lem3.3} and \eqref{harmo1}. This proves \eqref{hij}.

The proof of Lemma \ref{Lem3.4} is now complete. \qed

\begin{lemma}\label{Lem3.5} Let $p>5$ be a prime. Then
\begin{equation}\label{910}\sum_{k=1}^{p-1}\bi pk\f{(-1)^k}kH_{k-1,2}\eq \f {9}{10} p^2B_{p-5}\pmod{p^3}.\end{equation}
\end{lemma}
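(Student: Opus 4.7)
The plan is to expand $\binom{p}{k}$ in powers of $p$ and reduce the left-hand side modulo $p^3$ to two multiple harmonic sums of depth two, which can then be evaluated using Lemma \ref{Lem3.3}. Starting from
$$\binom{p}{k}\;=\;\frac{(-1)^{k-1}p}{k}\prod_{j=1}^{k-1}\left(1-\frac{p}{j}\right),$$
I keep terms through order $p^2$ in the product to obtain
$$\binom{p}{k}\frac{(-1)^k}{k}\;\equiv\;-\frac{p}{k^2}+\frac{p^2 H_{k-1}}{k^2}\pmod{p^3},$$
the $O(p^3)$ remainder being $p^3$ times a $p$-adic integer. Multiplying by $H_{k-1,2}$ and summing for $1\le k\le p-1$ then reduces the left-hand side modulo $p^3$ to $-pT_1+p^2T_2$, where
$$T_1=\sum_{k=1}^{p-1}\frac{H_{k-1,2}}{k^2}=\sum_{1\le j<k\le p-1}\frac{1}{j^2k^2},\qquad T_2=\sum_{k=1}^{p-1}\frac{H_{k-1}H_{k-1,2}}{k^2}.$$

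The sum $T_1$ is handled immediately by Lemma \ref{Lem3.3}(i) with $s=t=2$ (the $s\equiv t\pmod 2$ branch), giving $T_1\equiv -2pB_{p-5}/5\pmod{p^2}$. For $T_2$ modulo $p$, my key move is the involution $k\mapsto p-k$ on $\{1,\ldots,p-1\}$: using $H_{p-1}\equiv H_{p-1,2}\equiv 0\pmod p$, the standard telescoping gives $H_{p-k-1}\equiv H_k$, $H_{p-k-1,2}\equiv -H_{k,2}$, and $(p-k)^{-2}\equiv k^{-2}\pmod p$. Substituting produces $T_2\equiv -\sum_{k=1}^{p-1}H_kH_{k,2}/k^2\pmod p$, and expanding $H_k=H_{k-1}+1/k$ and $H_{k,2}=H_{k-1,2}+1/k^2$ on the right recovers another copy of $T_2$; rearranging yields
$$2T_2\;\equiv\;-\sum_{k=1}^{p-1}\frac{H_{k-1}}{k^4}-\sum_{k=1}^{p-1}\frac{H_{k-1,2}}{k^3}-H_{p-1,5}\pmod p.$$
Two more applications of Lemma \ref{Lem3.3}(i), with $(s,t)=(1,4)$ and $(s,t)=(2,3)$, evaluate the two double sums to $B_{p-5}$ and $-2B_{p-5}$, while $H_{p-1,5}\equiv 0\pmod p$ by \eqref{harmo2}. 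Hence $T_2\equiv B_{p-5}/2\pmod p$.

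Assembling the two evaluations yields
$$-pT_1+p^2T_2\;\equiv\;\frac{2p^2 B_{p-5}}{5}+\frac{p^2 B_{p-5}}{2}\;=\;\frac{9p^2B_{p-5}}{10}\pmod{p^3},$$
which is the desired congruence. The main obstacle is the evaluation of $T_2$: a direct expansion of $H_{k-1}H_{k-1,2}$ as a double sum produces depth-three multiple harmonic sums that would force the use of Zhao's triple-sum formulas in Lemma \ref{Lem3.3}(ii)--(iii) (and is presumably the route anticipated by the preparation of Lemma \ref{Lem3.4}), whereas the reflection $k\mapsto p-k$ relates $T_2$ to itself and leaves only depth-two sums already covered by Lemma \ref{Lem3.3}(i), which is much cleaner.
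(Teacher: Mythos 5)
Your proof is correct. The opening reduction coincides with the paper's: both expand $\binom{p}{k}$ via $(-1)^{k-1}\binom{p-1}{k-1}=\prod_{0<j<k}(1-p/j)\equiv 1-pH_{k-1}\pmod{p^2}$, reduce the left-hand side modulo $p^3$ to $-pT_1+p^2T_2$ with $T_1=\sum_{k=1}^{p-1}H_{k-1,2}/k^2$ and $T_2=\sum_{k=1}^{p-1}H_{k-1}H_{k-1,2}/k^2$, and obtain $T_1\equiv-\frac25 pB_{p-5}\pmod{p^2}$ from a depth-two congruence (the paper quotes \eqref{22}, which comes from Lemma \ref{Lem3.2}; your application of Lemma \ref{Lem3.3}(i) with $s=t=2$ gives the same value). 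Where you genuinely diverge is the evaluation of $T_2\pmod p$: the paper writes $H_{k-1}H_{k-1,2}/k^2$ as $\sum_{1\le j<k,\,1\le i<k}1/(ij^2k^2)$, splits this into one depth-two and two depth-three multiple harmonic sums, and invokes Zhao's triple-sum congruence (part (ii) of Lemma \ref{Lem3.3}) to get $T_2\equiv 2B_{p-5}-\frac32B_{p-5}=\frac12B_{p-5}$. Your reflection $k\mapsto p-k$, together with $H_{p-1-k}\equiv H_k$, $H_{p-1-k,2}\equiv -H_{k,2}$ and $(p-k)^{-2}\equiv k^{-2}\pmod p$, relates $T_2$ to $-\sum_k H_kH_{k,2}/k^2$ and hence to itself, leaving only the depth-two sums $\sum_{j<k}1/(jk^4)\equiv B_{p-5}$ and $\sum_{j<k}1/(j^2k^3)\equiv-2B_{p-5}$ plus $H_{p-1,5}\equiv0\pmod p$; this yields the same value $T_2\equiv\frac12B_{p-5}\pmod p$ while avoiding the triple-sum machinery entirely, which is a cleaner route for this particular lemma (though the paper still needs the triple-sum results elsewhere, e.g.\ in Lemma \ref{Lem3.4} and in the proof of Theorem \ref{three}). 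All your numerical evaluations and the final assembly $\frac25+\frac12=\frac9{10}$ check out.
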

\Proof. As
$$(-1)^{k-1}\bi{p-1}{k-1}=\prod_{0\ls j<k}\l(1-\f pj\r)\eq1-pH_{k-1}\pmod {p^2}$$ for all $k=1,\ldots,p-1$, we have
\begin{align*}\sum_{k=1}^{p-1}\bi pk\f{(-1)^k}kH_{k-1,2}=&p\sum_{k=1}^{p-1}\bi{p-1}{k-1}\f{(-1)^k}{k^2}H_{k-1,2}
\\\eq& p\sum_{k=1}^{p-1}\f{pH_{k-1}-1}{k^2}H_{k-1,2}\pmod{p^3}.
\end{align*}
By \eqref{22},
$$\sum_{k=1}^{p-1}\f{H_{k-1,2}}{k^2}\eq -\f25pB_{p-5}\pmod{p^2}.$$
Note also that
\begin{align*}\sum_{k=1}^{p-1}\f{H_{k-1}H_{k-1,2}}{k^2}
=&\sum_{1\ls j<k\ls p-1\atop 1\ls i<k}\f1{ij^2k^2}
\\=&\sum_{1\ls j<k\ls p-1}\f1{j^3k^2}+\sum_{1\ls i<j<k\ls p-1}\f1{ij^2k^2}+\sum_{1\ls j<i<k\ls p-1}\f1{j^2ik^2}
\\\eq&\f{(-1)^2}{3+2}\bi{3+2}3B_{p-5}+\l((-1)^1\bi 51-(-1)^2\bi 52\r)\f{B_{p-5}}{2\times5}+0
\\\eq&2B_{p-5}-\f{3}2B_{p-5}=\f {B_{p-5}}2\pmod p
\end{align*}
with the help of parts (i) and (iii) of Lemma \ref{Lem3.3}. Combining the above, we get the desired congruence
\eqref{910}. \qed

\begin{lemma}\label{Lem3.6} For any prime $p>5$, we have
$$\bi{2p-1}{p-1}\eq1+2pH_{p-1}-\f 45p^5B_{p-5}\pmod{p^6}.$$
\end{lemma}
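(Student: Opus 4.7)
The approach is to expand $\bi{2p-1}{p-1}=\prod_{k=1}^{p-1}(1+p/k)$ via the $p$-adic logarithm. Writing
$$L:=\sum_{k=1}^{p-1}\log\l(1+\f pk\r)=\sum_{s\gs1}\f{(-1)^{s-1}}{s}p^sH_{p-1,s}\in\Z_p,$$
Wolstenholme's theorem ($v_p(H_{p-1})\gs2$) together with Lemma~\ref{Lem3.2} yields $v_p(L)\gs3$; hence $L^j/j!\eq0\pmod{p^6}$ for every $j\gs2$, so $\bi{2p-1}{p-1}=\exp(L)\eq1+L\pmod{p^6}$. Terms with $s\gs5$ contribute nothing modulo $p^6$ (Lemma~\ref{Lem3.2} gives $v_p(H_{p-1,5})\gs2$), so
$$\bi{2p-1}{p-1}\eq 1+pH_{p-1}-\f{p^2H_{p-1,2}}{2}+\f{p^3H_{p-1,3}}{3}-\f{p^4H_{p-1,4}}{4}\pmod{p^6}.$$

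To reach the stated form, I would eliminate $pH_{p-1}$ using the reflection identity for $H_{p-1,s}$. Substituting $k\mapsto p-k$ and expanding $(p-k)^{-s}$ as a geometric series in $p/k$ gives
$$(-1)^sH_{p-1,s}=\sum_{j\gs0}\bi{s+j-1}{j}p^jH_{p-1,s+j}\qquad(s\gs1),$$
an exact identity in $\Z_{(p)}$. Specializing to $s=1$ and truncating modulo $p^6$ produces
$$2pH_{p-1}\eq -p^2H_{p-1,2}-p^3H_{p-1,3}-p^4H_{p-1,4}-p^5H_{p-1,5}\pmod{p^6}.$$
Combining this with the previous display to rewrite $pH_{p-1}-\tfrac12p^2H_{p-1,2}$ in terms of $2pH_{p-1}$ and higher-order harmonic sums yields
$$\bi{2p-1}{p-1}-1-2pH_{p-1}\eq \f{5p^3H_{p-1,3}}{6}+\f{p^4H_{p-1,4}}{4}+\f{p^5H_{p-1,5}}{2}\pmod{p^6}.$$

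The final step is to invoke Lemma~\ref{Lem3.2} at $r=1$ (Glaisher's congruences) to evaluate the three remaining harmonic sums: $H_{p-1,3}\eq-\f{6}{5}p^2B_{p-5}\pmod{p^3}$, $H_{p-1,4}\eq\f{4}{5}pB_{p-5}\pmod{p^2}$, and $H_{p-1,5}\eq-\f{15}{7}p^2B_{p-7}\pmod{p^3}$, the latter ensuring $\tfrac12p^5H_{p-1,5}\eq0\pmod{p^6}$. The respective contributions to the right side are $-p^5B_{p-5}$, $\tfrac{1}{5}p^5B_{p-5}$, and $0$, summing to $-\f{4}{5}p^5B_{p-5}$, which is the claim.

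The hard part is careful precision bookkeeping throughout: verifying $v_p(L)\gs3$ (so $\exp(L)\eq1+L\pmod{p^6}$), checking that the tails of both the logarithmic series and the reflection identity genuinely lie in $p^6\Z_p$, and applying each Glaisher-type evaluation from Lemma~\ref{Lem3.2} at exactly the precision it supports. No deeper techniques beyond these standard $p$-adic manipulations are required, but every power of $p$ must be tracked.
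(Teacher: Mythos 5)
Your argument is correct, but it takes a genuinely different route from the paper. The paper disposes of this lemma in two lines: it quotes Tauraso's congruence $\binom{2p-1}{p-1}\equiv 1+2pH_{p-1}+\frac{2}{3}p^3H_{p-1,3}\pmod{p^6}$ (\cite[Theorem 2.4]{Tauraso}) and then substitutes $H_{p-1,3}\equiv-\frac{6}{5}p^2B_{p-5}\pmod{p^3}$ from Lemma \ref{Lem3.2}. What you have done is essentially re-derive Tauraso's result from scratch via the $p$-adic exponential/logarithm and the reflection identity $(-1)^sH_{p-1,s}=\sum_{j\gs0}\binom{s+j-1}{j}p^jH_{p-1,s+j}$; I checked your bookkeeping ($v_p(L)\gs3$, the truncations mod $p^6$, the coefficients $\frac56,\frac14,\frac12$, and the final sum $-p^5B_{p-5}+\frac15p^5B_{p-5}=-\frac45p^5B_{p-5}$) and it all goes through, and your intermediate expression is consistent with Tauraso's since $\frac16p^3H_{p-1,3}+\frac14p^4H_{p-1,4}\equiv0\pmod{p^6}$. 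The trade-off is clear: the paper's proof is short but leans on an external black box, while yours is self-contained modulo Lemma \ref{Lem3.2} and Wolstenholme, at the cost of careful valuation tracking. One small blemish: your parenthetical claim that Lemma \ref{Lem3.2} gives $v_p(H_{p-1,5})\gs2$ requires $p>rs+2=7$, and it actually fails at $p=7$ (where $v_7(H_{6,5})=1$, since $B_{p-1-s}=B_1\ne0$ there); fortunately you only ever need $H_{p-1,5}\equiv0\pmod p$, which holds for all $p>5$ by \eqref{harmo2}, so the proof survives — just cite the weaker fact.
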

\Proof. It is known (cf. \cite[Theorem 2.4]{Tauraso}) that
$$\binom{2p-1}{p-1}\equiv 1+2pH_{p-1}+\frac{2}{3}p^3H_{p-1,3}\pmod{p^6}.$$
By Lemma \ref{Lem3.2},
\begin{equation}\label{h3}H_{p-1,3}\eq-\f{3\times4}{2\times5}p^2B_{p-5}=-\f 65p^2B_{p-5}\pmod{p^3}.
\end{equation}
So, the desired congruence follows. \qed

\begin{lemma}\label{Lem3.7} For any prime $p>7$, we have
\begin{equation}\label{236}H_{p-1}+\f p2H_{p-1,2}+\f{p^2}6H_{p-1,3}\eq0\pmod{p^6}.
\end{equation}
\end{lemma}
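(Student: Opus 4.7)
The plan is to pair $k$ with $p-k$ inside each of $H_{p-1}$, $H_{p-1,2}$, and $H_{p-1,3}$, thereby producing $p$-adically convergent series in the half-sums $h_s := H_{(p-1)/2,s}$, and then to exploit massive cancellations in the targeted linear combination. Using $p-k=-k(1-p/k)$ together with the binomial series $(1-p/k)^{-s}=\sum_{n\ge 0}\binom{n+s-1}{n}(p/k)^n$, a short calculation (the constant terms cancel when $s$ is odd and double when $s$ is even) gives
\[H_{p-1}=-\sum_{n\ge 1}p^n h_{n+1},\qquad H_{p-1,2}=2h_2+\sum_{n\ge 1}(n+1)p^n h_{n+2},\qquad H_{p-1,3}=-\sum_{n\ge 1}\binom{n+2}{2}p^n h_{n+3}.\]

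Substituting these into $H_{p-1}+\frac{p}{2}H_{p-1,2}+\frac{p^2}{6}H_{p-1,3}$ and collecting, the coefficient of $p^t h_{t+1}$ equals $-1+\frac{t}{2}-\frac{t(t-1)}{12}=-\frac{(t-3)(t-4)}{12}$ for $t\ge 3$, while for $t\in\{1,2\}$ the third series contributes nothing and the first two combine to $-1+1=0$. The factor $(t-3)(t-4)$ also kills $t=3$ and $t=4$, leaving
\[H_{p-1}+\frac{p}{2}H_{p-1,2}+\frac{p^2}{6}H_{p-1,3}=-\sum_{t\ge 5}\frac{(t-3)(t-4)}{12}\,p^t h_{t+1}.\]
This collapse of the low-order terms is the decisive algebraic step. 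Modulo $p^6$, every summand with $t\ge 6$ vanishes automatically (since $p>3$ makes $12$ a $p$-adic unit and each $h_s\in\mathbb{Z}_p$), so only the term $-p^5 h_6/6$ needs further attention.

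The remaining input is the congruence $h_6\equiv 0\pmod p$. By Fermat's little theorem $1/k^6\equiv k^{p-7}\pmod p$, so $H_{p-1,6}\equiv\sum_{k=1}^{p-1}k^{p-7}\pmod p$; since $p>7$ forces $(p-1)\nmid(p-7)$, this classical power sum vanishes modulo $p$. Pairing $k\leftrightarrow p-k$ with $(p-k)^6\equiv k^6\pmod p$ gives $H_{p-1,6}\equiv 2h_6\pmod p$, whence $h_6\equiv 0\pmod p$, supplying the extra factor of $p$ needed to finish.

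The main obstacle is the coefficient bookkeeping leading to $-(t-3)(t-4)/12$: the four-fold cancellation at $t\in\{1,2,3,4\}$ must be verified carefully and is exactly what distinguishes this particular linear combination from similar ones. The hypothesis $p>7$ is sharp in this approach: for $p=7$ one has $p-7=0$, so $\sum_{k=1}^{p-1}k^{p-7}\equiv-1\pmod 7$ and the argument for $h_6\equiv 0\pmod p$ collapses.
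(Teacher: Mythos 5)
Your proof is correct, and it takes a genuinely different route from the paper's. The paper works with the full sums: it expands $\sum_{k=1}^{p-1}\frac{1}{p-k}=H_{p-1}$ via $\frac{1}{p-k}\equiv-\sum_{i=0}^{5}p^{i}k^{-i-1}\pmod{p^6}$ to obtain $2H_{p-1}+pH_{p-1,2}+p^2H_{p-1,3}+p^3H_{p-1,4}\equiv0\pmod{p^6}$ (after discarding $H_{p-1,5}$ and $H_{p-1,6}$ by Glaisher-type bounds), and then must eliminate the leftover $H_{p-1,4}$ term by importing the nontrivial congruence $H_{p-1,4}\equiv-\frac{2}{3p}H_{p-1,3}\pmod{p^3}$ from Z.-H. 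Sun's work on Bernoulli congruences. Your half-sum expansion sidesteps that external input entirely: the coefficient $-\frac{(t-3)(t-4)}{12}$ of $p^t h_{t+1}$ vanishes for $t\le 4$, so the only surviving obstruction modulo $p^6$ is $-\frac{1}{6}p^5h_6$, which dies by the elementary facts $2h_6\equiv H_{p-1,6}\pmod p$ and $H_{p-1,6}\equiv\sum_{k=1}^{p-1}k^{p-7}\equiv0\pmod p$ for $p>7$. I checked the three expansions (signs and binomial coefficients for $s=1,2,3$) and the coefficient identity $-1+\frac{t}{2}-\frac{t(t-1)}{12}=-\frac{(t-3)(t-4)}{12}$, including the boundary cases $t=1,2$; all are right, and the $p$-adic convergence of the geometric-type series is legitimate since $|p/k|_p<1$ and each $h_s\in\Z_p$. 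What your approach buys is self-containedness and an explanation of why precisely the weights $1,\frac12,\frac16$ produce cancellation to order $p^5$; what the paper's approach buys is that it stays entirely within the standard sums $H_{p-1,s}$, which it reuses elsewhere. One cosmetic remark: your mechanism also shows the congruence fails to extend for $p=5,7$ only because of the $h_6$ term, consistent with the paper's Remark 3.2 that the equivalent form \eqref{H12} still holds for those primes by direct check.
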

\Proof.  Clearly,
\begin{align*}\sum_{k=1}^{p-1}\f1{p-k}
\eq&\sum_{k=1}^{p-1}\f{p^6-k^6}{p-k}\l(-\f1{k^6}\r)
=-\sum_{k=1}^{p-1}\f{p^5+p^4k+p^3k^2+p^2k^3+pk^4+k^5}{k^6}
\\=&-p^5H_{p-1,6}-p^4H_{p-1,5}-p^3H_{p-1,4}-p^2H_{p-1,3}-pH_{p-1,2}-H_{p-1}
\\\eq&-p^3H_{p-1,4}-p^2H_{p-1,3}-pH_{p-1,2}-H_{p-1}\pmod{p^6}
\end{align*}
since $H_{p-1,6}\eq0\pmod p$ and $H_{p-1,5}\eq0\pmod{p^2}$ by \eqref{harmo2} or Lemma \ref{Lem3.2}.
Note that
$$H_{p-1,4}\eq 4p\l(\f{B_{2p-6}}{2p-6}-2\f{B_{p-5}}{p-5}\r)\eq-\f2{3p}H_{p-1,3}\pmod{p^3}$$
by \cite[Theorem 5.1 and Remark 5.1]{SZH}.
Therefore
$$2H_{p-1}+pH_{p-1,2}\eq \f23p^2H_{p-1,3}-p^2H_{p-1,3}\pmod{p^6}$$
and hence \eqref{236} follows. \qed

\begin{remark}\label{Rem3.2}
Let $p>3$ be a prime. If $p>7$ then \eqref{236} has the following equivalent form
\begin{equation}\label{H12}
2H_{p-1}+pH_{p-1,2}\eq \f25p^4B_{p-5}\pmod{p^5}
\end{equation}
in view of \eqref{h3}.
It is easy to see that \eqref{H12} also holds for $p=5,7$.
\end{remark}

\section{Proof of Theorem \ref{main1}}
\setcounter{equation}{0}

To prove Theorem \ref{main1}, we need an auxiliary theorem.

\begin{theorem}\label{three}
Let $p>5$ be a prime. Then
\begin{align}
&\label{cong1}\quad \sum_{k=1}^{p-1}\binom{p}{k}^2\binom{p+k-1}{k-1}\equiv pH_{p-1}+\f{9}{10}p^5B_{p-5}\pmod{p^6},\\
&\label{cong2}\quad \sum_{k=1}^{p-1}\binom{p}{k}^2\binom{p+k-1}{k-1}k\equiv -3p^2H_{p-1}+ \f{21}{10}p^6B_{p-5}\pmod{p^7},
\end{align}
and
\begin{equation}\label{cong3}\begin{aligned}
&\quad \sum_{k=1}^{p-1}\binom{p}{k}^2\binom{p+k-1}{k-1}k^2
\\\equiv& -p^2-(4p^5+4p^4+2p^3)H_{p-1}+\f45p^7B_{p-5}\pmod{p^8}.
\end{aligned}\end{equation}
\end{theorem}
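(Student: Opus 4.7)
The plan is to reduce the three sums to weighted harmonic sums via the factorization
$$\binom{p}{k}^2\binom{p+k-1}{k-1}=\f{p^2}{k^2}\binom{p-1}{k-1}^2\binom{p+k-1}{k-1}=\f{p^2}{k^2}\prod_{j=1}^{k-1}\l(1-\f{p}{j}\r)\l(1-\f{p^2}{j^2}\r),$$
where the second equality uses $(1-p/j)^2(1+p/j)=(1-p/j)(1-p^2/j^2)$. Expanding the right-most product as a polynomial $\sum_{i\ge 0}c_i(k)p^i$ in $p$, the coefficients $c_i(k)$ are polynomials in $H_{k-1,s}$ and in the elementary symmetric functions $\sigma_s(k):=e_s(1,1/2,\ldots,1/(k-1))$; one checks $c_0=1$, $c_1=-H_{k-1}$, $c_2=(H_{k-1}^2-3H_{k-1,2})/2$, $c_3=H_{k-1}H_{k-1,2}-\sigma_3(k)$, and similarly for $c_4,c_5$. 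Interchanging summation converts each $T_j:=\sum_{k=1}^{p-1}\binom{p}{k}^2\binom{p+k-1}{k-1}k^j$ into
$$T_j=p^2\sum_{i\ge 0}p^i\sum_{k=1}^{p-1}\f{c_i(k)}{k^{2-j}},$$
a $p$-adic integer linear combination of nested Euler-type sums.

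For \eqref{cong1} I would truncate at $i=3$ to obtain
$$T_0\equiv p^2H_{p-1,2}-p^3\sum_{k=1}^{p-1}\f{H_{k-1}}{k^2}+\f{p^4}{2}\sum_{k=1}^{p-1}\f{H_{k-1}^2-3H_{k-1,2}}{k^2}+p^5\sum_{k=1}^{p-1}\f{H_{k-1}H_{k-1,2}-\sigma_3(k)}{k^2}\pmod{p^6}.$$
The first term yields $-2pH_{p-1}+\f{2}{5}p^5B_{p-5}$ modulo $p^6$ via Remark~\ref{Rem3.2}; the second is a mod-$p^3$ refinement of Tauraso's identity \eqref{tau}; the third is dispatched by Lemma~\ref{Lem3.4} (using $H_{k-1}^2=H_{k-1,2}+2\sigma_2(k)$ together with \eqref{ij} and \eqref{hij}); and the last needs only mod $p$ information, obtainable from Lemma~\ref{Lem3.3} after expressing $\sigma_3(k)=(H_{k-1}^3-3H_{k-1}H_{k-1,2}+2H_{k-1,3})/6$ by Newton's identity. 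The net $B_{p-5}$-coefficient should collapse to $\f{9}{10}$ and the net $H_{p-1}$-coefficient to $1$, reproducing \eqref{cong1}.

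Congruences \eqref{cong2} and \eqref{cong3} follow by the same scheme with outer weights $1/k$ and $1$ and with the expansions carried to $i=4$ and $i=5$ respectively. The new coefficients $c_4(k),c_5(k)$ introduce the further symmetric sums $E_s(k):=e_s(1,1/4,\ldots,1/(k-1)^2)$; after summation they generate additional double- and triple-index Euler sums, all accessible via Lemmas~\ref{Lem3.2}, \ref{Lem3.3}, and~\ref{Lem3.4}. Lemma~\ref{Lem3.5} is tailored to supply the $\f{9}{10}p^2B_{p-5}$-piece appearing at the $p^7B_{p-5}$-stage of \eqref{cong3}, and Lemma~\ref{Lem3.6}, an upgrade of Wolstenholme's theorem, handles the full symmetric combination $\sum_i\sigma_i(p)p^i=\binom{2p-1}{p-1}$ that surfaces from the top-order terms.

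The principal obstacle is securing enough $p$-adic precision on the auxiliary harmonic sums. Tauraso's formula \eqref{tau} gives $\sum_{k=1}^{p-1}H_{k-1}/k^2$ only modulo $p^2$, but here we need it modulo $p^3$; this refinement can be derived from the splitting
$$\sum_{k=1}^{p-1}\f{H_{k-1}}{k^2}=\sum_{1\ls j<k\ls p-1}\f{1}{jk^2}=H_{p-1}H_{p-1,2}-H_{p-1,3}-\sum_{1\ls j<k\ls p-1}\f{1}{j^2k}$$
together with Lemma~\ref{Lem3.3}(i) applied to $\sum_{j<k}1/(j^2k)$ and the precise values of $H_{p-1}H_{p-1,2}$ and $H_{p-1,3}$ provided by \eqref{harmo1}, \eqref{h3}, and Remark~\ref{Rem3.2}. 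A secondary bookkeeping challenge is that each of $T_0,T_1,T_2$ produces $B_{p-3}$- and $H_{p-1}$-contributions from several terms; these must conspire, via Lemma~\ref{Lem3.7} (equivalently Remark~\ref{Rem3.2}), to collapse into the clean $pH_{p-1}$, $-3p^2H_{p-1}$, and $-(4p^5+4p^4+2p^3)H_{p-1}$ asserted on the right-hand sides of \eqref{cong1}--\eqref{cong3}.
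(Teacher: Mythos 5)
Your factorization $\binom{p}{k}^2\binom{p+k-1}{k-1}=\frac{p^2}{k^2}\prod_{j<k}(1-p/j)(1-p^2/j^2)$ is correct, and the coefficients $c_0,\dots,c_3$ you list are right; but this ``expand everything'' route diverges from the paper's proof in a way that creates a genuine gap. The paper deliberately does \emph{not} expand both copies of $\binom{p}{k}$: it writes $\sigma_r=p\sum_k\binom{p}{k}k^{r-1}\binom{p-1}{k-1}\binom{p+k-1}{k-1}$, expands only the product $\binom{p-1}{k-1}\binom{p+k-1}{k-1}=\prod_{j<k}(1-p^2/j^2)$ in powers of $p^2$, and then evaluates the surviving alternating sums $\sum_k\binom{p}{k}(-1)^{k-1}k^{r-1}$, $\sum_k\binom{p}{k}(-1)^kk^{r-1}H_{k-1,2}$ exactly or nearly so via the closed-form identities \eqref{equa1}--\eqref{equa3} and Lemma \ref{Lem3.5}. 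That device is what keeps every multiple harmonic sum that actually arises within reach of Lemmas \ref{Lem3.2}--\ref{Lem3.5}.

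Your version instead requires, already for \eqref{cong1}, the value of $\sum_{k=1}^{p-1}H_{k-1}/k^2=\sum_{1\ls j<k\ls p-1}1/(jk^2)$ modulo $p^3$ (its $i=1$ contribution is $-p^3\sum_kH_{k-1}/k^2$, and the coefficient of $p^2$ in that sum feeds directly into the $p^5B_{p-5}$ term of the answer). You correctly flag this as the principal obstacle, but your proposed fix does not work: the splitting $\sum_{j<k}1/(jk^2)+\sum_{j<k}1/(j^2k)=H_{p-1}H_{p-1,2}-H_{p-1,3}$ only determines the \emph{sum} of the two weight-3 double sums (to mod $p^3$, via \eqref{h3}), not either one separately, and Lemma \ref{Lem3.3}(i) gives $\sum_{j<k}1/(j^2k)$ only modulo $p$, since its mod-$p^2$ clause requires $s\eq t\pmod 2$, which fails for $(s,t)=(2,1)$. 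So the needed third $p$-adic digit of $\sum_kH_{k-1}/k^2$ is supplied by none of the cited lemmas, and the reflection $j\mapsto p-j$, $k\mapsto p-k$ again only reproduces the symmetric combination. A second instance of the same problem: the $c_2$ term of $T_0$ produces $\sum_{1\ls i<j<k\ls p-1}1/(ijk^2)$ needed modulo $p^2$, a weight-4 triple sum for which Lemma \ref{Lem3.3}(ii) (odd total weight, mod $p$ only) says nothing. Finally, the target coefficients $\tfrac{9}{10}$, $\tfrac{21}{10}$, $\tfrac45$ are asserted to ``collapse'' but are never computed, and Lemma \ref{Lem3.6} plays no role in Theorem \ref{three} (it enters only through the boundary term $\binom{2p-1}{p-1}a_i(p,p-1)$ in the proof of Theorem \ref{main1}, where the index $k=p$ occurs). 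To repair your argument you would either have to import mod-$p^3$ evaluations of weight-3 double harmonic sums (and mod-$p^2$ evaluations of weight-4 triple sums) from outside the paper, or adopt the paper's trick of leaving one factor $\binom{p}{k}$ intact and using \eqref{equa1}--\eqref{equa3}.
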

\begin{proof} For each $k=1,\ldots,p-1$, we clearly have
\begin{equation}\label{pk}\begin{aligned}\f1p\bi pk=&\f{(-1)^{k-1}}k\prod_{0< j<k}\l(1-\f pj\r)
\\\eq&\f{(-1)^{k-1}}k\(1-pH_{k-1}+p^2\sum_{1\ls i<j\ls k-1}\f1{ij}\)\pmod{p^3}
\end{aligned}\end{equation}
and
\begin{equation}\label{pkk}\begin{aligned}&(-1)^{k-1}\bi{p-1}{k-1}\bi{p+k-1}{k-1}
\\=&\prod_{0<j<k}\left(1-\f{p^2}{j^2}\right)
\eq 1-p^2H_{k-1,2}+p^4\sum_{1\ls i<j\ls k-1}\f1{i^2j^2}\pmod{p^6}.
\end{aligned}
\end{equation}

Let $r\in\{0,1,2\}$ and
\begin{equation}\label{sigma}\sigma_r:=\sum_{k=1}^{p-1}\bi pk^2\bi{p+k-1}{k-1}k^r.
\end{equation}
In view of the above, we have
\begin{align*}
\sigma_r=&p\sum_{k=1}^{p-1}\bi pkk^{r-1}\bi{p-1}{k-1}\bi{p+k-1}{k-1}
\\\eq&p\sum_{k=1}^{p-1}\bi pkk^{r-1}(-1)^{k-1}\(1-p^2H_{k-1,2}+p^4\sum_{1\ls i<j\ls k-1}\f1{i^2j^2}\)
\\\eq&p\sum_{k=1}^{p-1}\bi pkk^{r-1}(-1)^{k-1}+p^3\sum_{k=1}^{p-1}\bi pkk^{r-1}(-1)^kH_{k-1,2}
\\&+p^5\sum_{k=1}^{p-1}\f pk(1-pH_{k-1})k^{r-1}\sum_{1\ls i<j\ls k-1}\f1{i^2j^2}\pmod {p^8}.
\end{align*}

 {\it Case} 1. $r=0$.

 In this case,
 $$p\sum_{k=1}^{p-1}\bi pkk^{r-1}(-1)^{k-1}=pH_p-1=pH_{p-1}$$
 by \eqref{equa1}, and hence
$$\sigma_0\eq pH_{p-1}+p^3\sum_{k=1}^{p-1}\bi pk\f{(-1)^k}kH_{k-1,2}
\eq pH_{p-1}+\f 9{10}p^5B_{p-5}\pmod{p^6}$$
with the help of Lemma \ref{Lem3.5}. This proves \eqref{cong1}.

 {\it Case} 2. $r=1$.

 In this case,
 $$p\sum_{k=1}^{p-1}\bi pkk^{r-1}(-1)^{k-1}=-p\sum_{k=0}^p\bi pk(-1)^k=-p(1-1)^p=0,$$
 and also
 $$\sum_{1\ls i<j<k\ls p-1}\f1{i^2j^2k}\eq\l((-1)^2\bi 52-(-1)^1\bi 51\r)\f{B_{p-5}}{2\times5}=\f32B_{p-5}\pmod{p}$$
 by Lemma \ref{Lem3.3}.
 Thus
 \begin{equation}\label{sigma1}\sigma_1
 \eq p^3\sum_{k=1}^{p-1}\bi pk(-1)^kH_{k-1,2}+\f 32p^6B_{p-5}\pmod{p^7}.
 \end{equation}

 In view of \eqref{equa3} and \eqref{pk}, we have
 \begin{align*}\sum_{k=1}^{p-1}\bi pk(-1)^kH_{k-1,2}
 =&\sum_{k=1}^p\bi pk(-1)^kH_{k,2}+H_{p,2}-\sum_{k=1}^{p-1}\bi pk\f{(-1)^k}{k^2}
 \\\eq&-\f{H_p}p+H_{p,2}+p\sum_{k=1}^{p-1}\f{1-pH_{k-1}+p^2\sum_{1\ls i<j\ls k-1}\f1{ij}}{k^3}
 \\\eq&-\f{H_{p-1}}p+H_{p-1,2}+pH_{p-1,3}-p^2\sum_{1\ls j<k\ls p-1}\f1{jk^3}
 \\&+p^3\sum_{1\ls i<j<k\ls p-1}\f1{ijk^3}\pmod{p^4}.
 \end{align*}
 Note that
 $$H_{p-1,2}\eq\f25p^3B_{p-5}-\f 2pH_{p-1}\pmod{p^4} \ \t{and} \ H_{p-1,3}\eq-\f 65p^2B_{p-5}\pmod{p^3}$$
  by \eqref{H12} and \eqref{h3}.
  In view of Lemma \ref{Lem3.3},
 $$\sum_{1\ls j<k\ls p-1}\f1{jk^3}\eq\l(-3\bi 51-(-1)\bi 53-1-3\r)\f{pB_{p-5}}{2\times 5}=-\f 9{10}pB_{p-5}\pmod{p^2}$$
 and
 $$\sum_{1\ls i<j<k\ls p-1}\f1{ijk^3}\eq\l((-1)^1\bi 51-(-1)^3\bi 53\r)\f{B_{p-5}}{2\times5}=\f{B_{p-5}}2\pmod p.$$
 Therefore
 \begin{align*}\sum_{k=1}^{p-1}\bi pk(-1)^kH_{k-1,2}\eq&-3\f{H_{p-1}}p+\f25p^3B_{p-5}-\f 65p^3B_{p-5}+\f 9{10}p^3B_{p-5}+\f{p^3}2B_{p-5}\\\eq&-\f 3pH_{p-1}+\f{3}5p^3B_{p-5}\pmod{p^4}.
 \end{align*}
 Combining this with \eqref{sigma1}, we see that
 $$\sigma_1\eq p^3\l(-\f 3pH_{p-1}+\f{3}5p^3B_{p-5}\r)+\f 32p^6B_{p-5}=-3p^2H_{p-1}+\f{21}{10}p^6B_{p-5}\pmod{p^7}.$$
 This proves \eqref{cong2}.

 {\it Case} 3. $r=2$.

 In this case,
 $$p\sum_{k=1}^{p-1}\bi pkk^{r-1}(-1)^{k-1}=p^2\sum_{k=1}^{p-1}\bi{p-1}{k-1}(-1)^{k-1}=p^2\l((1-1)^{p-1}-1\r)=-p^2$$
 and hence
 \begin{align*}
 \sigma_2\eq&-p^2+p^4\sum_{k=1}^{p-1}\bi{p-1}{k-1}(-1)^kH_{k-1,2}
 \\&+p^6\sum_{k=1}^{p-1}\sum_{1\ls i<j\ls k-1}\f1{i^2j^2}-p^7\sum_{k=1}^{p-1}\sum_{1\ls i<j\ls k-1}\f{H_{k-1}}{i^2j^2}
 \\\eq&-p^2+p^4\l(\f{H_{p-1}}{p-1}+H_{p-1,2}\r)+p^6\l(\f 25pB_{p-5}-3\f{H_{p-1}}{p^2}\r)-p^7\f{3H_{p-1}}{p^2}
 \\\eq&-p^2+p^4\l(-(p+1)H_{p-1}+\f25p^3B_{p-5}-\f2pH_{p-1}\r)
 \\&+\f25p^7B_{p-5}-3p^4H_{p-1}-3p^5H_{p-1}\pmod{p^8}
 \end{align*}
 with the help of \eqref{equa3}, \eqref{H12} and Lemma \ref{Lem3.4}.
 This yields the desired \eqref{cong3}.

 In view of the above, we have completed the proof of Theorem \ref{three}. \end{proof}
\medskip
\noindent{\it Proof of Theorem \ref{main1}}. It is easy to see that \eqref{1.4}-\eqref{1.6} hold for $p=5$. Below we assume $p>5$.

For $i=1,2,3$ let
$$
S_i:=\sum_{k=0}^{p-1}\binom{p}{k+1}^2\binom{p+k}{k}a_{i}(p,k)
$$
with $a_i(n, k)$ given by Lemma \ref{Lem2.1}.
In view of Lemma \ref{Lem2.1}, it suffices to show the
following three congruences:
\begin{align}\label{C1}
S_1&\equiv -4p^2-4p^7B_{p-5}\pmod{p^8},\\
\label{C2}S_2&\equiv 2p^2+10p^3H_{p-1}+p^7B_{p-5}\pmod{p^8},
\\\label{CC}S_3&\equiv 2p^3-3p^2+(10p^4-5p^3)H_{p-1}-\f 32p^7B_{p-5}\pmod {p^8},
\end{align}
Clearly,
\begin{equation}\label{Si}\begin{aligned}
S_{i}=&\sum_{j=1}^{p}\binom{p}{j}^2\binom{p+j-1}{j-1}a_{i}(p,j-1)\\
=&\binom{2p-1}{p-1}a_{i}(p,p-1)+\sum_{k=1}^{p-1}\binom{p}{k}^2\binom{p+k-1}{k-1}a_{i}(p,k-1).
\end{aligned}\end{equation}
Note that
$$a_1(p,p-1)=-p^3-3p^2,\ a_2(p,p-1)=2p^2\ \t{and}\ a_3(p,p-1)=p^3-2p^2.$$
In view of Lemma \ref{Lem3.6}, we have
\begin{equation}\label{2pp}\begin{aligned}&\bi{2p-1}{p-1}a_i(p,p-1)
\\\eq&\l(1+2pH_{p-1}-\f 45p^5B_{p-5}\r)a_i(p,p-1)
\\\eq&\begin{cases}-p^3-2p^4H_{p-1}-3p^2-6p^3H_{p-1}+\f{12}5p^7B_{p-5}\pmod{p^8}&\t{if}\ i=1,
\\2p^2+4p^3H_{p-1}-\f 85p^7B_{p-5}\pmod{p^8}&\t{if}\ i=2,
\\p^3+2p^4H_{p-1}-2p^2-4p^3H_{p-1}+\f 85p^7B_{p-5}\pmod{p^8}&\t{if}\ i=3.\end{cases}
\end{aligned}\end{equation}

For $r=0,1,2$ let $\sigma_r$ be defined as in \eqref{sigma}.
Noting $$a_1(p,k-1)=(p^3-p^2)-(p^2+3p)k-(p-1)k^2$$
and applying Theorem \ref{three}, we get
\begin{align*}
&\sum_{k=1}^{p-1}\bi pk^2\bi{p+k-1}{k-1}a_1(p,k-1)
\\=&(p^3-p^2)\sigma_0-(p^2+3p)\sigma_1-(p-1)\sigma_2
\\\eq&(p^3-p^2)\l(pH_{p-1}+\f{9}{10}p^5B_{p-5}\r)
-(p^2+3p)\l(-3p^2H_{p-1}+\f{21}{10}p^6B_{p-5}\r)
\\&-(p-1)\l(-p^2-(4p^5+4p^4+2p^3)H_{p-1}+\f 45p^7B_{p-5}\r)
\\\eq&p^3-p^2+(2p^4+6p^3)H_{p-1}-\f{32}5p^7B_{p-5}\pmod{p^8}.
\end{align*}
Combining this with \eqref{Si} and \eqref{2pp}, we get \eqref{C1}.

Similarly,
\begin{align*}
S_2\eq&2p^2+4p^3H_{p-1}-\f 85p^7B_{p-5}+3p^2\sigma_0-p\sigma_1
\\\eq&2p^2+4p^3H_{p-1}-\f 85p^7B_{p-5}+3p^2\l(pH_{p-1}+\f{9}{10}p^5B_{p-5}\r)
\\&-p\l(-3p^2H_{p-1}+\f{21}{10}p^6B_{p-5}\r)
\\\eq&2p^2+10p^3H_{p-1}-p^7B_{p-5}\pmod{p^8}.
\end{align*}
This proves \eqref{C2}.

As
$$a_3(p,k-1)=4p^3-2p^2-(2p^2+p)k-(p-1)k^2,$$
we have
\begin{align*}
&\sum_{k=1}^{p-1}\bi pk^2\bi{p+k-1}{k-1}a_3(p,k-1)
\\=&(4p^3-2p^2)\sigma_0-(2p^2+p)\sigma_1-(p-1)\sigma_2
\\\eq&(4p^3-2p^2)\l(pH_{p-1}+\f{9}{10}p^5B_{p-5}\r)
-(2p^2+p)\l(-3p^2H_{p-1}+\f{21}{10}p^6B_{p-5}\r)
\\&-(p-1)\l(-p^2-(4p^5+4p^4+2p^3)H_{p-1}+\f 45p^7B_{p-5}\r)
\\\eq&p^3-p^2+(8p^4-p^3)H_{p-1}-\f{31}{10}p^7B_{p-5}\pmod{p^8}.
\end{align*}
Combining this with \eqref{Si} and \eqref{2pp}, we get \eqref{CC}.

The proof of Theorem \ref{main1} is now complete. \qed

\section{Proofs of theorems \ref{a16b4} and \ref {a32b8}}\label{bandA}

\setcounter{theorem}{0}
\setcounter{equation}{0}

Clearly $L_2(0)=0$, and for each $n\in\Z^+$ we have
$$L_2(n)=L_2\l(\l\lfloor\f n2\r\rfloor\r)+\begin{cases}0&\t{if}\ n\eq0,3\pmod4,\\1&\t{if}\ n\eq1,2\pmod4.
\end{cases}$$
E.\,Rowland and R.\,Yassawi discovered that the two least
 significant binary digits of ~$L_2(n)$
are completely determined by the fourth and the third
least significant digits of  the Ap\'ery number $A_n$.
(As $A_n\eq1\pmod4$, the two least significant digits of $A_n$
are always $0$ and $1$, and hence these digits contain no information
about~$n$.)

\begin{theorem} \label{Th5.1}\cite[Theorem 3.28]{RY}
Let $n\in\N$. If
\begin{equation}
  A_n\equiv 8j+4i+1 \pmod{ 16}
\end{equation} with $\{i,j\}\subseteq \{0,1\}$,
then
\begin{equation}
L_2(n)\equiv 2j+i \pmod{4}.
\end{equation}\label{RY}
\end{theorem}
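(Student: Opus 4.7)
The plan is to establish the implication by showing that both $(A_n \bmod 16)$ and $(L_2(n) \bmod 4)$ are $2$-automatic sequences and then verifying the claimed correspondence between their finite automata. The key input is the Ap\'ery recurrence
$$(n+1)^3 A_{n+1} = (2n+1)(17n^2 + 17n + 5) A_n - n^3 A_{n-1},$$
together with Rowland's general principle that a $P$-recursive integer sequence reduced modulo a prime power is automatic in the base of that prime; this is the central framework exploited in \cite{RY}.

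First I would explicitly construct the automaton for $A_n \bmod 16$ using Rowland's algorithm: one starts from the full sequence, repeatedly splits it into its even- and odd-indexed subsequences, and declares each new distinct subsequence a state; the algorithm terminates because $P$-recursivity modulo a prime power forces only finitely many such subsequences to arise. Second, I would describe the (small) automaton for $L_2(n) \bmod 4$: the recursion stated at the start of the section shows that a machine whose states track the last binary digit read together with the running value of $L_2 \bmod 4$ suffices, giving a constant number of states. Third, running both automata in parallel on every reachable joint state, I would check that whenever the $A_n$-automaton outputs $8j + 4i + 1$, the $L_2$-automaton outputs $2j + i \bmod 4$; this is a bounded finite verification.

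The main obstacle is step one. The leading coefficient $(n+1)^3$ is divisible by $8$ precisely when $n \equiv 7 \pmod 8$, so the recurrence does not, by itself, determine $A_{n+1} \bmod 16$ from $(A_n, A_{n-1}) \bmod 16$. One must either unroll the recurrence along residue classes modulo a sufficiently high power of $2$ (inflating the state count but mechanical), or work directly with the explicit sum $A_n = \sum_{k=0}^n \binom{n}{k}^2 \binom{n+k}{k}^2$ together with Kummer's theorem to classify summands by their $2$-adic valuation and Lucas's theorem to handle the residues of the odd parts (odd summands contribute $1 \bmod 8$ since any odd square is $1 \bmod 8$, summands with exactly one carry contribute $4 \bmod 16$, and the rest vanish modulo $16$). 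Either route reduces the question to counting binary-digit patterns, which is itself automatic; then the alignment with the $L_2$ automaton is confirmed by a single finite table lookup.
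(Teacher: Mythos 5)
Your overall strategy---reduce both sides to base-$2$ finite automata and check the claimed correspondence on a finite product automaton---is exactly the shape of the argument in Rowland--Yassawi, and it is the same methodology this paper uses for its own Theorems \ref{a16b4} and \ref{a32b8}. The automaton for $L_2(n)\bmod 4$ is indeed immediate from the recursion $L_2(n)=L_2(\lfloor n/2\rfloor)+\varepsilon(n)$ with $\varepsilon(n)=1$ exactly when $n\equiv 1,2\pmod 4$, and the concluding finite verification on the product automaton is legitimate.

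The genuine gap is in your step one, and you have half-diagnosed it yourself. The ``general principle'' that a $P$-recursive integer sequence reduced modulo a prime power is automatic is not a theorem, and it is not the input used by \cite{RY}: their Theorem 2.1 applies to \emph{diagonals of multivariate rational functions}, and the role of Straub's representation of $A_n$ as the diagonal coefficient of $1/((1-x_1-x_2)(1-x_3-x_4)-x_1x_2x_3x_4)$ is precisely to place $A_n$ in that class, where $2$-automaticity mod $2^k$ comes with an effective Cartier-operator algorithm that never divides by anything. Your proposed repair of the recurrence route runs into exactly the obstruction you name: when $n\equiv 7\pmod 8$ the leading coefficient $(n+1)^3$ absorbs at least three factors of $2$, so $(A_n,A_{n-1})\bmod{16}$ does not determine $A_{n+1}\bmod{16}$, and ``unrolling along residue classes'' does not fix this---you would need $A_n$ modulo ever higher powers of $2$ as $n$ runs through $2^m-1$, and no finite state space is shown to suffice (the finiteness of the $2$-kernel is the whole content of what must be proved, not a consequence of $P$-recursivity). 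Your alternative Kummer--Lucas route is also under-specified at the crucial modulus: an odd square is $1\bmod 8$ but can be $1$ or $9\bmod{16}$, so knowing that a summand $\bigl(\binom nk\binom{n+k}k\bigr)^2$ is odd does not determine its contribution mod $16$; you would need the odd parts modulo $8$ (a Davis--Webb/Granville-type refinement of Lucas), not merely their parity. As written, the construction of the $A_n\bmod{16}$ automaton---the heart of the proof---is not established; the diagonal-of-a-rational-function input that this paper quotes and exploits is the missing ingredient.
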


Theorem \ref{a16b4} shows that
 the fourth and the third
least significant digits of~$A_n$
also determine the second least significant digit
of $\beta_n$ (all numbers $\beta_n$ are  odd, so the very least significant digit
is always~$1$).

Theorem \ref{a32b8} shows that
 the knowledge of the five least significant digits of $A_n$
by itself is not sufficient for
 determining the  three least significant
digits of~$L_2(n)$. This is because the third least significant digit of
$\beta_n$ can be $0$ or $1$. But the additional knowledge of this digit
gives the missing bit of information
for determining the three  least significant digits of
$L_2(n)$ according to Theorem~\ref{a32b8}.

\begin{figure}[ht]
 \center
 \includegraphics[width=.9\textwidth]{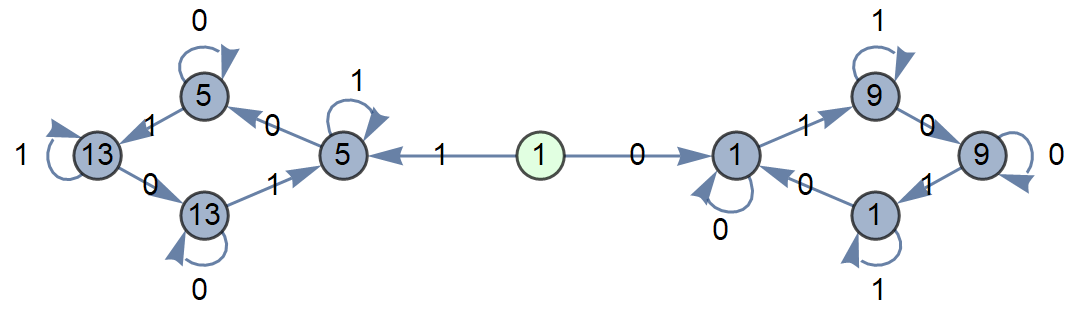}
 \caption{Automaton $\mathcal {A}_{16}$
 calculating  $A_n\!\!\mod{16}$}
 \label{A16}
 \end{figure}

\begin{figure}[ht]
 \center
 \includegraphics[width=.5\textwidth]{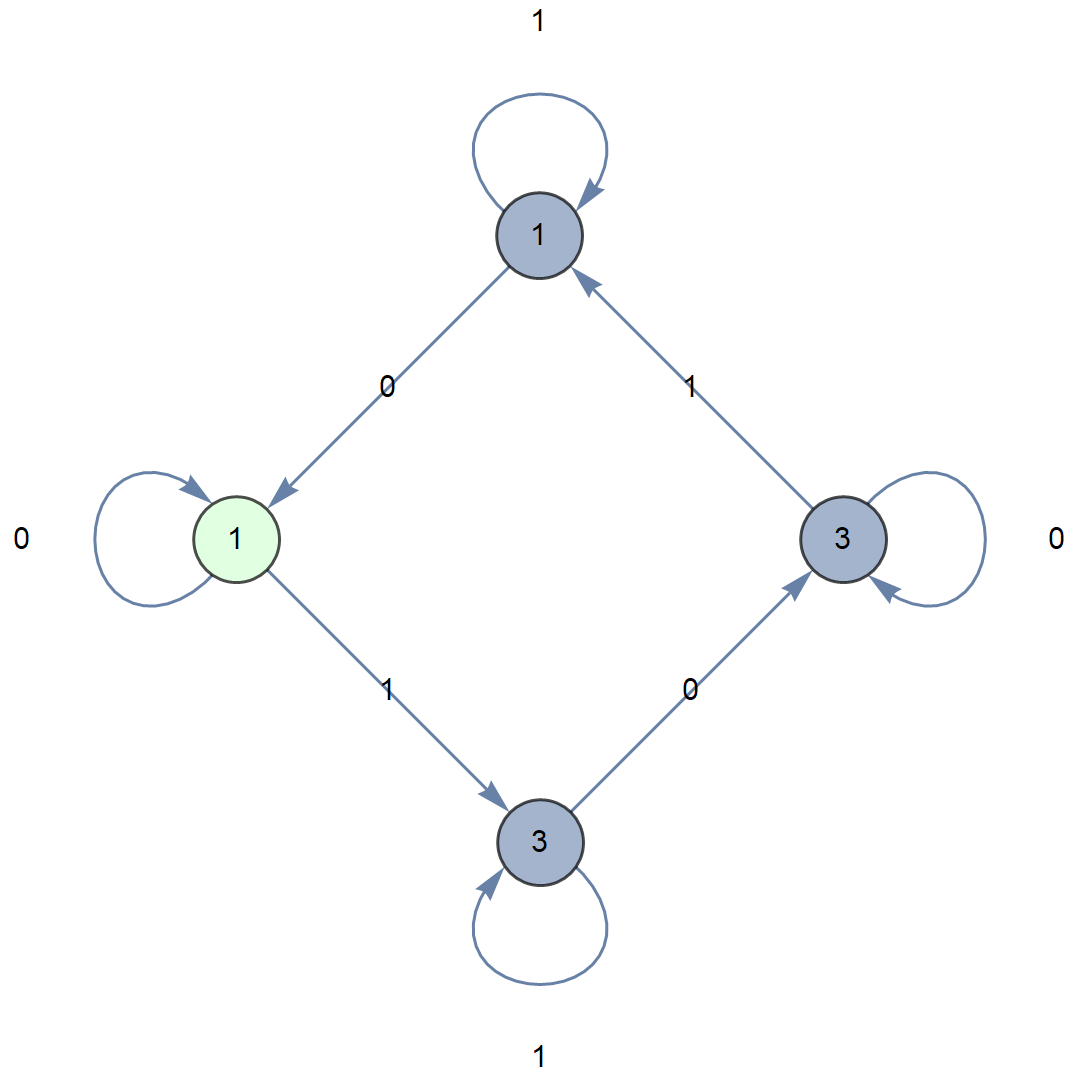}
 \caption{Automaton $\mathcal {B}_{4}$ calculating  $\beta_n\!\!\mod{4}$}
 \label{B4}
 \end{figure}

Theorems \ref{a16b4} and \ref{a32b8} can be proved
by the technique used in \cite{RY}
 with the aid of
software \cite{Rsoft} implemented by E.\,Rowland.

Namely,
 Ap\'{e}ry numbers
of both kinds
are known to be the diagonal sequences of certain rational functions.
In particular, A.\,Straub \cite{Straub} proved that $A_n$ is the
coefficient of $x_1^nx_2^nx_3^nx_4^n$ in the formal
Taylor expansion  of the function
\begin{equation}
\frac{1}{
 (1 - x_1 - x_2 )(1 - x_3 - x_4 ) - x_1 x_2 x_3 x_4}.
\end{equation}
Similar coefficient in the expansion of
\begin{equation}\label{polyb}
\frac{1}{
 (1 - x_1 )(1 - x_2 )(1 - x_3 )(1 -x_4 ) - (1 - x_1 )x_1 x_2 x_3}
\end{equation}
is, according to \cite{RY}, equal to $\beta_n$.

According to \cite[Theorem 2.1]{RY},
the above representations of Ap\'ery numbers   as the diagonal coefficients
imply that for every prime $p$ and its power $q$
the sequences  $A_n\!\!\mod{q}$ and $\beta_n\!\!\mod{q}$
are $p$-automatic. Moreover,
\cite{RY} contains two algorithms  for
constructing corresponding automata;
{\tt Mathematica} implementations of these algorithms are given
in \cite{Rsoft}.

To prove Theorem \ref{a16b4}, we need to examine the sequence
 $A_n\!\!\mod{16}$. Corresponding automaton $\mathcal {A}_{16}$
 was calculated in \cite{Rpaper} and exhibited in \cite{RY}, we reproduce it here
 in Figure~\ref{A16} (with the initial state marked in light color). This automaton
 calculates  $A_n\!\!\mod{16}$ in the following way. Let
 \begin{equation}
   n=\sum_{k=0}^m n_k2^k
 \end{equation}
where $\{n_0,\dots,n_m\}\subseteq\{0,1\}$;
start from the initial state and follow the oriented path
formed by edges marked $n_0,\dots,n_m$; the final vertex is labeled by
$A_n\!\!\mod{16}$.

Function  {\tt AutomaticSequenceReduce} from \cite{Rsoft}, being applied to  \eqref{polyb},
returns the minimal
automaton $\mathcal{ B}_4$ calculating
$\beta_n\!\!\mod{4}$; this automaton is exhibited in Figure~\ref{B4}.

Let
\begin{equation*}
  f(1)=f(13)=1\quad\t{and}\quad f(5)=f(9)=3.
\end{equation*}
Theorem \ref{a16b4} asserts that
\begin{equation}\label{th}
  (\beta_n\!\!\!\mod{4})=f(A_n\!\!\!\mod{16}).
\end{equation}
We can construct an automaton calculating $f(A_n\!\!\mod{16})$
simply by applying function $f$ to the labels of states in
automaton $\mathcal {A}_{16}$. The resulting automaton is
not minimal, but we can minimize it using,
for example,  function
{\tt AutomatonMinimize} from \cite{Rsoft}. The minimal automaton calculating $f(A_n\!\!\mod{16})$
turns out to be equal to~$\mathcal{ B}_4$, which proves \eqref{th}
and hence Theorem \ref{a16b4} is proved as well.

\begin{figure}[ht]
 \center
 \includegraphics[width=.9\textwidth]{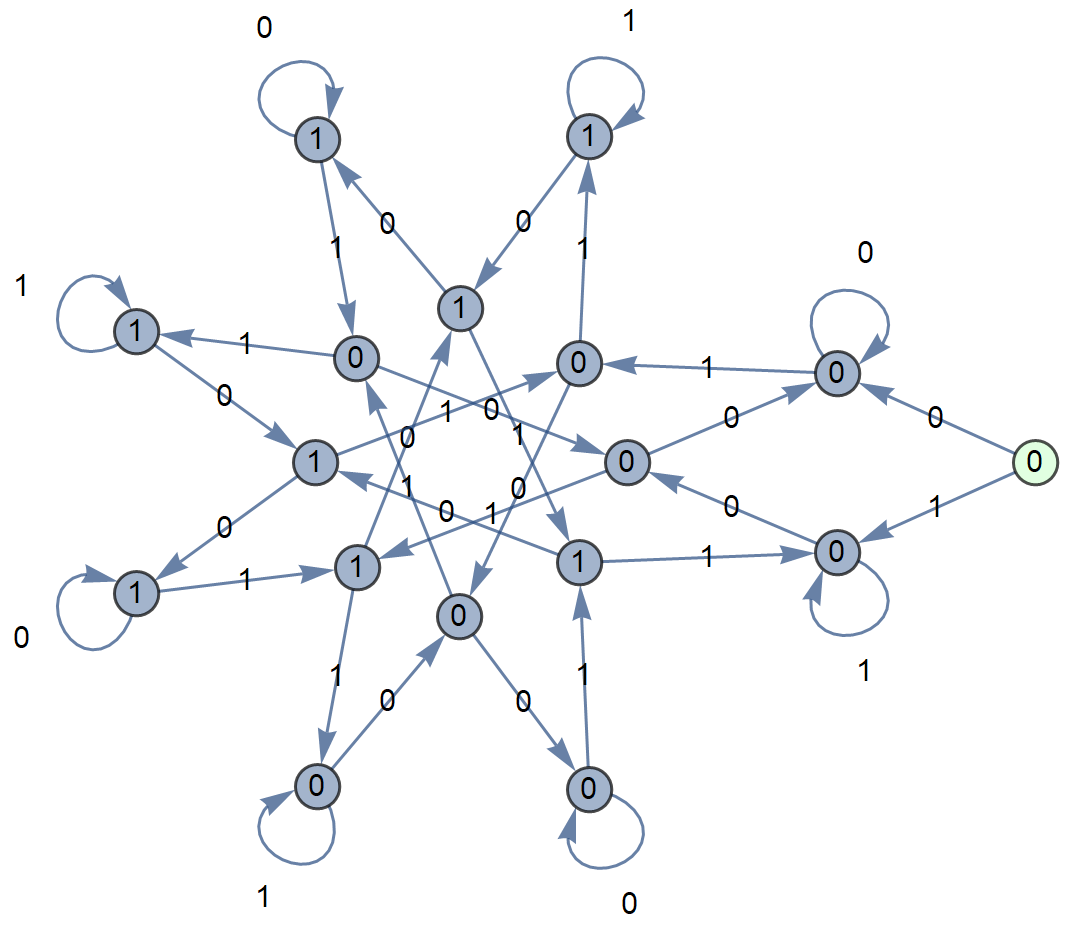}
 \caption{Automaton $\mathcal {A}_{32,5}$
 calculating  the fifth digit of $A_n$}
 \label{A325}
 \end{figure}

\begin{figure}[ht]
 \center
 \includegraphics[width=.5\textwidth]{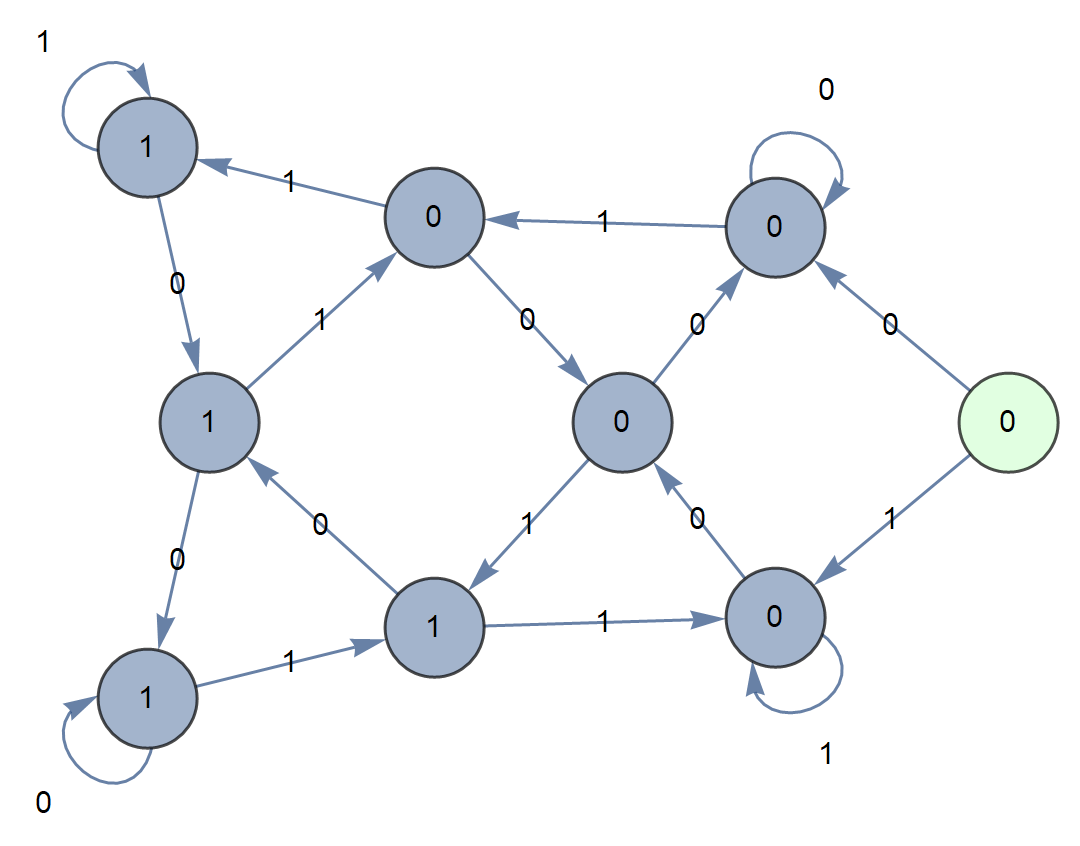}
 \caption{Automaton $\mathcal {B}_{8,3}$ calculating
 the third digit of $\beta_n$}
 \label{B83}
 \end{figure}

\begin{figure}[ht]
 \center
 \includegraphics[width=.5\textwidth]{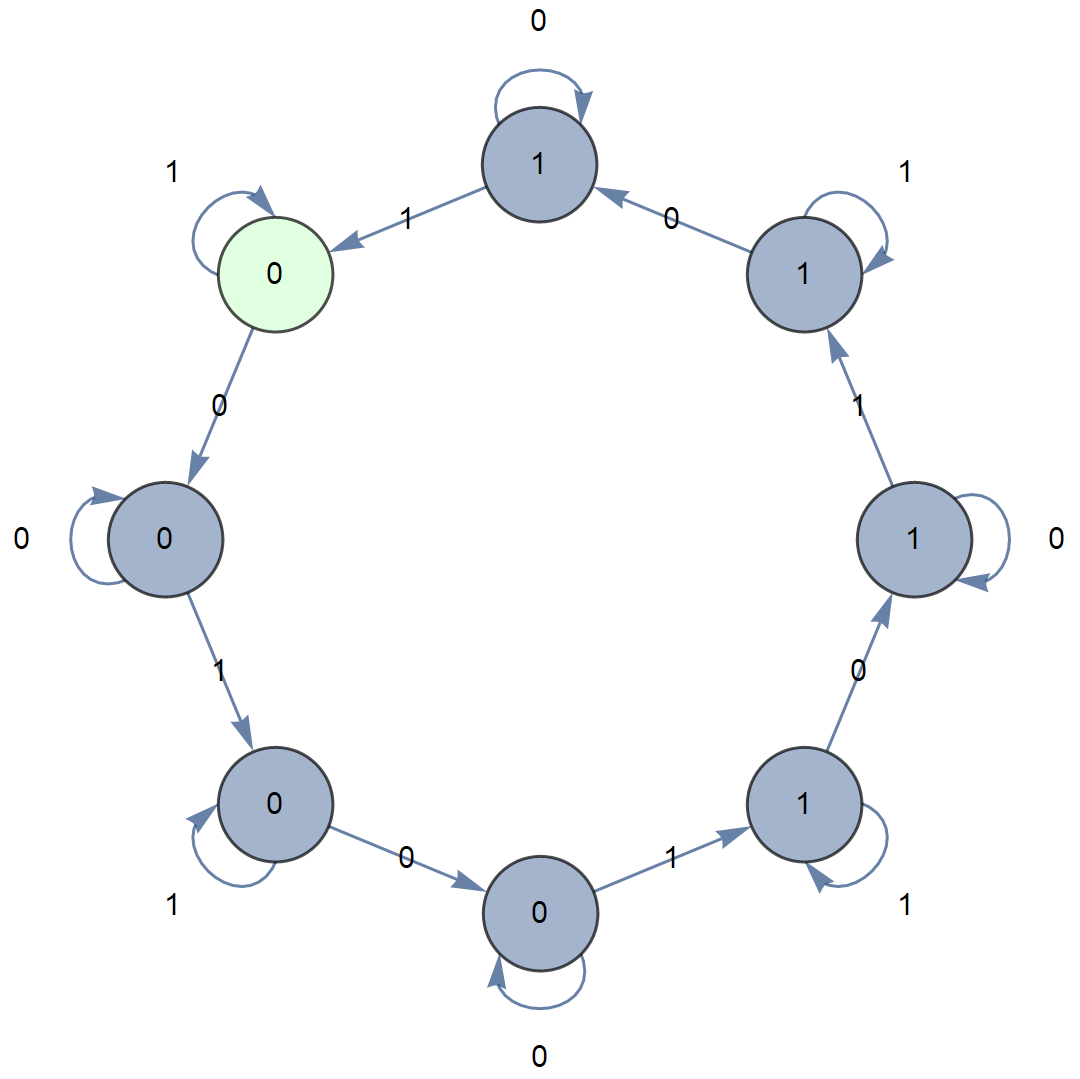}
 \caption{Automaton  calculating
 the third digit of $L_2(n)$}
 \label{C3}
 \end{figure}

The minimal automaton $\mathcal {A}_{32}$
calculating  $A_n\!\!\mod{32}$ was constructed in \cite{Rpaper}, it has 33 states;  the minimal
automaton $\mathcal {B}_{8}$ calculating  $\beta_n\!\!\mod{8}$ has 17 states.
Taking into account Theorem~\ref{Th5.1},
for proving Theorem \ref{a32b8} it is sufficient to work
just with the fifth least significant digits of $A_n$
and the third least significant digits of
$\beta_n$ and $L_2(n)$, which allows us to deal with automata
having  fewer
number of states.

Figures~\ref{A325} and \ref{B83} exhibit the minimal
automata $\mathcal {A}_{32,5}$ and $\mathcal {B}_{8,3}$
calculating  respectively the fifth least significant digit of $A_n$
and  the third least significant digit of
$\beta_n$.
These automata are easily constructed from
$\mathcal {A}_{32}$ and $\mathcal {B}_{8}$
by properly relabelling the states
and minimizing resulting automata.

With function {\tt AutomatonProduct} from \cite{Rsoft} we
can easily construct automaton $\mathcal {C}$ which is
 the product of
$\mathcal {A}_{32,5}$ and $\mathcal {B}_{8,3}$. Each state  of  $\mathcal {C}$
is labeled by two bits  (corresponding to the fifth digit of $A_n$ and
the third digit of $\beta_n$). Replacing each such pair of bits by their
sum modulo $2$ and performing minimization,
we get the automaton exhibited on Figure~\ref{C3}. It is
not difficult  to see
that  this automaton calculates  the third digit of~$L_2(n)$,
which proves Theorem~\ref{a32b8}.

\Ack. The authors would like to thank the third author's PhD student Chen Wang
for helpful comments on Lemmas \ref{Lem2.1} and \ref{Lem3.7}.

\end{document}